\DeclareSymbolFont{sfoperators}{OT1}{cmss}{m}{n}
\DeclareSymbolFontAlphabet{\mathsf}{sfoperators}
\def\operator@font{\mathgroup\symsfoperators}
\theoremstyle{plain}
\newtheorem{theorem}{Theorem}[section]
\newtheorem{corollary}[theorem]{Corollary}
\newtheorem{lemma}[theorem]{Lemma}
\newtheorem*{claim}{Claim}
\newtheorem{conjecture}[theorem]{Conjecture}
\newtheorem{problem}[theorem]{Problem}
\theoremstyle{definition}
\newtheorem{definition}[theorem]{Definition}
\theoremstyle{remark}
\newtheorem*{remark}{Remark}
\newcommand{\h}{\mathscr{H}}
\DeclareMathOperator{\cone}{Cone}
\DeclareMathOperator{\lattice}{Lattice}
\DeclareMathOperator{\intcone}{IntCone}
\newenvironment{claimenv}{\list{}{\rightmargin0pt\leftmargin10pt\topsep0pt}\item[]}{\endlist}
\newenvironment{subproof}{\begin{claimenv}\begin{proof}}{\end{proof}\end{claimenv}}
\title{On Hilbert Bases of Cuts}
\author{Luis Goddyn}
\address[Luis Goddyn]{Department of Mathematics, Simon Fraser
  University, Burnaby, Canada}
\email{goddyn@sfu.ca}
\author{Tony Huynh}
\address[Tony Huynh]{Department of Computer Science, University of Rome, Rome, Italy}
\email{tony.bourbaki@gmail.com}
\author{Tanmay Deshpande}
\address[Tanmay Deshpande]{}
\email{tanmaydesh5886@gmail.com}
\thanks{This research was partially funded by an NSERC discovery grant. \\
Tony Huynh is supported by the European Research Council under the European Unions Seventh Framework
Programme (FP7/2007-2013)/ERC Grant Agreement no. 279558.}
\begin{document}

\begin{abstract}
A \emph{Hilbert basis} is a set of vectors $X \subseteq \mathbb{R}^d$ such that the integer cone 
(semigroup) generated by $X$ is the intersection of the lattice generated by $X$ with the cone generated by $X$.  
Let $\h$ be the class of graphs whose set of cuts is a Hilbert basis in $\mathbb{R}^E$ (regarded as $\{0,1\}$-characteristic vectors indexed by edges).  
We show that $\mathscr{H}$ is not closed under edge deletions, subdivisions, nor $2$-sums.  
Furthermore, no graph having $K_6 \setminus e$ as a minor belongs to $\h$.
This corrects an error in [M. Laurent. Hilbert bases of cuts. \textit{Discrete Math.}, 150(1-3):257-279 \textbf{(1996)}].

For positive results, we give conditions under which 
the 2-sum of two graphs produces a member of $\h$.
Using these conditions we show that
all $K_5^{\perp}$-minor-free graphs are in $\h$, where $K_5^{\perp}$ is the unique 3-connected graph obtained by uncontracting an edge of $K_5$.  We also establish a relationship between 
edge deletion and subdivision. 
 Namely,  if $G'$  is  obtained from $G\in \h$ by subdividing $e$ two or more times,  then $G \setminus e \in \h$ if and only if $G'  \in \h$.
\end{abstract}

\maketitle

\section{Introduction}

Let $X$ be a set of vectors in $\mathbb{R}^d$.  We define 
\begin{align*}
\cone(X)&:=\left\{ \sum_{s \in X} c_s s : c_s \in \mathbb{R}_{\geq 0} \right\}, \\
\lattice(X)&:=\left\{ \sum_{s \in X} c_s s : c_s \in \mathbb{Z} \right\}, \\
\intcone(X)&:=\left\{ \sum_{s \in X} c_s s : c_s \in \mathbb{Z}_{\geq 0} \right\}.
\end{align*}

\begin{definition}
A set of vectors $X$ in $\mathbb{R}^d$ is a \emph{Hilbert basis} if 
\[
\intcone(X)=\cone(X) \cap \lattice(X).  
\]
\end{definition}

Hilbert bases 
were introduced by Giles and Pulleyblank~\cite{tdi} as a tool to study total dual integrality.  They are also connected to set packing, toric ideals and perfect graphs \cite{OSheaSebo}.   Combinatorially defined Hilbert bases have computational consequences, since membership testing is often easier for the cone and the lattice than for the integer cone.  This is the case, for example, with 
edge colouring and the set of perfect matchings of 
 a regular graph 
 \cite{EdmondsMatchingPolytope, LovaszMatchingLattice}.   We are interested here in the class of finite graphs whose sets of edge cuts form Hilbert bases.

All graphs here are assumed to be finite. Let $G=(V,E)$ be a  graph. A \emph{circuit} is the edge set of a cycle of $G$.
For $S \subseteq V$, we denote by $\delta(S) = \delta_G(S)$ the set of edges in $G$ having exactly one endpoint in $S$, and call $\delta(S) $ the \textit{cut} in $G$ generated by $S$.
Regarding each cut
$\delta(S)$ as a $\{0,1\}$-characteristic vector 
in $\mathbb R^E$, we define the vector set
$\mathcal B(G): = \{\delta(S) : S \subseteq V(G)\}$.
We define $\h$ to be the class of finite graphs $G$ for which $\mathcal B(G)$ forms a Hilbert basis.

Our aim is to study the class $\h$.  We remark that the 
(matroidal) version of the dual problem was completely solved by Alspach, Goddyn, and Zhang~\cite{graphswithccp}, where they 
show that the set of circuits of a graph $G$ is a Hilbert basis if and only if $G$ does not contain the Peterson graph as a minor.  
In contrast, the class $\h$ is less well-behaved.  For example, we show $\h$ is not closed under edge deletions, subdivisions, nor 2-sums.  
Furthermore, we show that no graph having $K_6 \setminus e$ as a minor belongs to $\h$.
This corrects an error in Laurent~\cite{laurent}.  

For positive results, we give conditions under which performing 2-sums does yield a graph in $\h$, and use this to show that
all $K_5^{\perp}$-minor-free graphs are in $\h$, where $K_5^{\perp}$ is the unique 3-connected single-element uncontraction of $K_5$.  

We also establish a relationship between 
edge deletion and subdivision;
if $G \in \h$ and $G'$  is obtained from $G$ by subdividing an edge $e$ two or more times, then $G \setminus e \in \h$ if and only if $G' \in \h$.


\section{Previous Results}
In this section, we review some previous results that we will need later.  

A \emph{bond} of a graph $G$ is an inclusionwise minimal nonempty cut of $G$.  Every cut of $G$ is a disjoint union of bonds, so
the bonds generate the same lattice and cone as $\mathcal B(G)$.
Indeed, there is a bijection between the bonds of $G$ and the extreme rays of $\cone(\mathcal B(G))$.
Gordan \cite{Gordan} (see \cite{Schrijver}) showed that for every finite set of vectors $X$ such that $\cone(X)$ has a vertex, there is a unique minimal set of vectors $\overline{X}$ which generate the same cone and lattice as $X$, such that $\overline{X}$ is a Hilbert basis.  The set $\overline{X}$ is called the \emph{minimal Hilbert basis} for $X$.  The vectors in $\overline{X} \setminus X$ are called \emph{quasi-Hilbert elements}.
  We shall give specific examples of quasi-Hilbert elements in the next section.
  
The lattice generated by the cuts of a graph is characterized as a special case of a general statement \cite[Proposition 2.4]{matchings} regarding the cocircuits of  a binary matroid having no Fano-minor.

\begin{lemma} \label{LOC}
 For every simple graph $G$ and $x\in \mathbb{Z}^{E(G)}$, $x \in \lattice(\mathcal B(G))$ if and only if  $\sum_{e\in C} x_e$ is even for each circuit $C$ of $G$.
\end{lemma}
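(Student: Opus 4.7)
The plan is to prove the two directions of the equivalence separately; the ``only if'' direction is routine, while the ``if'' direction is the substantive step and proceeds by reduction modulo $2$.

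For necessity, suppose $x = \sum_S c_S\,\delta(S)$ with $c_S \in \mathbb{Z}$. The standard parity fact that $|C \cap \delta(S)|$ is even for every circuit $C$ and every vertex subset $S$ (a closed walk must cross the boundary of $S$ an even number of times) immediately gives
\[
\sum_{e \in C} x_e \;=\; \sum_S c_S\,|C \cap \delta(S)| \;\equiv\; 0 \pmod 2.
\]

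For sufficiency, I would first establish the auxiliary fact that $2\chi_f \in \lattice(\mathcal B(G))$ for every edge $f = uv$. Since $G$ is simple we have $u \ne v$, and a coordinatewise check verifies $\delta(\{u\}) + \delta(\{v\}) - \delta(\{u,v\}) = 2\chi_f$; hence $2\mathbb{Z}^{E(G)} \subseteq \lattice(\mathcal B(G))$. Consequently, it suffices to show that $x \bmod 2$ lies in the $\mathbb{F}_2$-span of $\mathcal B(G)$. The circuit hypothesis, read modulo $2$, says that $x \bmod 2$ is orthogonal in $\mathbb{F}_2^{E(G)}$ to the characteristic vector of every circuit, and hence to the entire $\mathbb{F}_2$-cycle space. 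Invoking the classical fact that the cycle space and the cut space of $G$ are mutual orthogonal complements in $\mathbb{F}_2^{E(G)}$, we conclude that $x \bmod 2$ lies in the $\mathbb{F}_2$-cut space, so there exists $S \subseteq V(G)$ with $x \equiv \delta(S) \pmod 2$. Writing $x - \delta(S) = 2y$ with $y \in \mathbb{Z}^{E(G)}$, the auxiliary fact yields $x = \delta(S) + \sum_{f \in E(G)} y_f\,(2\chi_f) \in \lattice(\mathcal B(G))$, as required.

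The main obstacle is justifying the $\mathbb{F}_2$ orthogonal-complement step rigorously: one must verify the dimension count $\dim_{\mathbb{F}_2}(\text{cycle space}) + \dim_{\mathbb{F}_2}(\text{cut space}) = |E(G)|$, which follows from the standard identities $\dim(\text{cycle}) = |E| - |V| + c$ and $\dim(\text{cut}) = |V| - c$, where $c$ is the number of connected components of $G$, together with the parity pairing already used for necessity. Alternatively, since the lemma is cited in the paper as a specialization of a known result on cocircuits of binary matroids without a Fano minor, and graphic matroids are regular hence Fano-free, one could simply appeal to that more general statement in \cite{matchings} and bypass the ad hoc argument above.
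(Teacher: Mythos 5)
Your proof is correct. Note that the paper does not actually prove Lemma~\ref{LOC} at all: it simply records it as a special case of a cited result on the cocircuit lattice of binary matroids with no Fano minor (\cite[Proposition 2.4]{matchings}), which is exactly the fallback you mention in your last sentence. What you supply instead is a complete, self-contained graph-theoretic argument, and every step checks out: the identity $\delta(\{u\})+\delta(\{v\})-\delta(\{u,v\})=2\chi_f$ is verified coordinatewise and correctly exploits simplicity (for a pair of parallel edges one could only produce $2(\chi_f+\chi_{f'})$, and indeed the lemma fails for multigraphs, e.g.\ $x=\chi_f-\chi_{f'}$ on a digon); the reduction modulo $2$ to the statement that the cut space is the orthogonal complement of the cycle space over $\mathbb{F}_2$ is the right key fact, and you correctly flag that over $\mathbb{F}_2$ this requires the dimension count $(|E|-|V|+c)+(|V|-c)=|E|$ rather than a naive nondegeneracy appeal. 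The trade-off between the two routes is the usual one: the citation is shorter and places the lemma in its natural matroidal generality (graphic matroids are regular, hence Fano-free), while your argument is elementary, makes the role of the simplicity hypothesis visible, and keeps the paper self-contained on this point. Either is acceptable; if you write yours out in full, the only detail worth spelling out is that $\{\delta(S): S\subseteq V\}$ is itself an $\mathbb{F}_2$-subspace (closed under symmetric difference via $\delta(S)\triangle\delta(T)=\delta(S\triangle T)$), so that landing in the cut space really does produce a single $S$ with $x\equiv\delta(S)\pmod 2$.
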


We use the notation  $x(C) := \sum_{e\in C} x_e$. 
The cone generated by $\mathcal B (G)$ is very complicated in general.  
See \cite{AvisImaiIto, cutconefacets} for a discussion of this.  
Seymour~\cite{multiflows} characterized those graphs (and matroids) for which  $\cone(\mathcal B (G))$ is described by a natural family of inequalities called \emph{cycle constraints}.

\begin{lemma} \label{noK5}
For every  graph $G$ with no $K_5$-minor and every $x \in \mathbb{R}^{E(G)}$, $x \in \cone(\mathcal B(G))$ if and only if $x_e \geq 0$ for all $e \in E(G)$; and
\begin{equation*}
x_e \leq x (C \setminus \{e\}),
\end{equation*}
for all circuits $C$ of $G$ and all edges $e \in C$.
\end{lemma}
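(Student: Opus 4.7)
The plan is to prove the two directions separately. The forward direction follows from parity: any cut $\delta(S)$ meets any circuit $C$ in an even number of edges, so if $e \in \delta(S) \cap C$ then at least one other edge of $C$ lies in $\delta(S)$, giving $(\delta(S))_e \leq (\delta(S))(C \setminus \{e\})$; otherwise the inequality reads $0 \leq (\delta(S))(C \setminus \{e\})$ and is trivial. Taking nonnegative linear combinations preserves these inequalities, so every $x \in \cone(\mathcal B(G))$ is nonnegative and satisfies the cycle constraints.

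For the converse I would use a structural decomposition. By Wagner's theorem, every $K_5$-minor-free graph arises from planar graphs and the Wagner graph $V_8$ by clique-sums of order at most three. The plan is then to handle three cases: planar graphs, the single graph $V_8$, and the inductive step across a small clique-sum. Induction proceeds on $|E(G)|$, so that the two summands in a clique-sum have already been treated.

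For planar $G$ I would exploit planar duality. Under the bijection $E(G) \leftrightarrow E(G^*)$, bonds of $G$ correspond to circuits of $G^*$, so $\cone(\mathcal B(G))$ equals the cone generated by circuits of $G^*$, and the cycle constraints for $G$ translate into cocircuit constraints for $G^*$. Seymour's companion result on the circuit cone of binary matroids with no $F_7$-minor then applies to $G^*$ (graphic matroids never have $F_7$ as a minor) and yields the required description after transporting back through duality. For $V_8$ I would verify the claim by a direct polyhedral check, which is feasible since $V_8$ is small and has only finitely many bonds.

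The hard part will be the clique-sum step. Suppose $G = G_1 \oplus_k G_2$ along a clique $K$ with $k \leq 3$, and $x$ on $E(G)$ satisfies the hypotheses. I would split $x$ into vectors $x^1$ on $E(G_1)$ and $x^2$ on $E(G_2)$ agreeing with $x$ off $K$, so that each $x^i$ satisfies the cycle constraints on $G_i$, then glue the cut decompositions of $G_1$ and $G_2$ into one on $G$ by pairing cuts that agree along $K$. The cases $k = 1, 2$ are immediate. For $k = 3$ the triangle $K$ is itself a circuit of $G$, whose three cycle constraints precisely restrict the permissible values of $x^1$ and $x^2$ on $K$; checking feasibility of the splitting, and that the paired gluing actually expresses $x$ as a conic combination of cuts of $G$, is the step where I expect the real work to lie.
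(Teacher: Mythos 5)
The paper does not actually prove this lemma: it is imported wholesale from Seymour's theorem that a binary matroid has the sums-of-circuits property if and only if it has no $F_7^*$, $R_{10}$, or $M^*(K_5)$ minor, applied to the cographic matroid of $G$ (which has an $M^*(K_5)$ minor exactly when $G$ has a $K_5$ minor). Your forward direction is correct (and is the easy half), and your overall strategy --- reprove the graphic case by decomposition, in the style of Barahona --- is viable. The $3$-clique-sum gluing works exactly as you suspect: the values of $x$ on the triangle $K=\{a,b,c\}$ force the total weights of the three nontrivial partition types of $K$ to be $(x_{ab}+x_{ac}-x_{bc})/2$ and its two symmetric versions, and nonnegativity of these quantities is precisely the triangle cycle constraint, after which the two conic decompositions can be paired off.

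However, your statement of Wagner's theorem is wrong, and the error is not cosmetic. Only the \emph{edge-maximal} $K_5$-minor-free graphs are clique-sums of planar graphs and $V_8$; the graph $K_{3,3}$ is $3$-connected, $K_5$-minor-free, triangle-free (hence admits no clique-sum decomposition of order at most $3$), and is neither planar nor $V_8$, so your induction has no case that covers it. To repair this you need an additional deletion lemma: if the cut cone of $G$ is given by nonnegativity and cycle constraints, the same holds for $G \setminus e$. Since $\cone(\mathcal B(G\setminus e))$ is the coordinate projection of $\cone(\mathcal B(G))$, this amounts to showing that for $y$ satisfying the constraints on $G\setminus e$ there is a value $\gamma$ with $2y_f - y(P) \le \gamma \le y(P')$ for all $u$--$v$ paths $P,P'$ (where $e=uv$) and all $f\in P$; the nonemptiness of this interval needs a symmetric-difference argument on $P\cup P'$ and is essentially the feasibility-interval machinery of Section~4. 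That is the missing idea in your plan. Separately, your planar base case is not really a base case: by duality it is Seymour's sums-of-circuits theorem for graphs, a result of comparable depth to the lemma being proved, and the excluded minors you would need are $F_7^*$, $R_{10}$ and $M^*(K_5)$, not $F_7$. If you are willing to cite that matroid theorem you may as well cite the cographic form of the present lemma directly, as the paper does; otherwise the planar case requires its own proof.
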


Using this result, Fu and Goddyn~\cite{circuitcover} showed that every $K_5$-minor-free graph is in $\h$.

\begin{lemma} \label{noK5in}
All $K_5$-minor-free graphs are in $\h$.  
\end{lemma}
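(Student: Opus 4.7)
The plan is to show, by induction on the total weight $\sum_{e} x_e$, that every $x \in \cone(\mathcal B(G)) \cap \lattice(\mathcal B(G))$ is a nonnegative integer combination of cuts. By Lemmas \ref{LOC} and \ref{noK5}, such a vector $x$ lies in $\mathbb{Z}_{\geq 0}^{E(G)}$, satisfies $x(C) \in 2\mathbb{Z}$ for every circuit $C$, and satisfies the cycle constraints $x_e \leq x(C \setminus e)$. The base case $x = 0$ is trivial, and the inductive step asks for a cut $\delta(S)$ with $\chi_{\delta(S)} \leq x$ componentwise such that $x - \chi_{\delta(S)}$ still meets all of these conditions.

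A natural first attempt is to exploit Lemma \ref{LOC}: since $x(C)$ is even on every circuit, $x \bmod 2$ is orthogonal (over $\mathbb{F}_2$) to the cycle space and hence equals $\chi_{\delta(S_0)}$ for some $S_0 \subseteq V(G)$. If $x$ were strictly positive on $\delta(S_0)$ then subtracting $\chi_{\delta(S_0)}$ would keep $x$ in $\mathbb{Z}_{\geq 0}^{E(G)}$ and preserve the lattice condition. However, $x$ may vanish on some edges of $\delta(S_0)$, and even when it does not, the cycle constraints of Lemma \ref{noK5} can fail for $x - \chi_{\delta(S_0)}$, because a circuit $C$ may meet $\delta(S_0)$ in more than two edges. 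Overcoming these two obstacles is the crux of the argument.

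The resolution uses Wagner's structure theorem: every $K_5$-minor-free graph is obtained by clique-sums of order at most three from planar graphs and the Wagner graph $V_8$. For planar $G$, the planar dual $G^*$ has no Petersen minor (the Petersen graph is non-planar), and bonds of $G$ are in bijection with circuits of $G^*$, so the Alspach--Goddyn--Zhang theorem \cite{graphswithccp} transfers the Hilbert basis property to $\mathcal B(G)$. The Wagner graph is handled by direct inspection of its (finitely many) minimal Hilbert basis of cuts. The remaining step is to show that $\h$ is closed under $3$-clique-sums; this is where I expect the main obstacle to lie, since the introduction already cautions that $\h$ is not closed under $2$-sums. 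The extra rigidity of summing along a triangle is what makes the gluing work: one splits $x$ on the two sides of the separating triangle, uses Lemmas \ref{LOC} and \ref{noK5} to verify that each restriction (suitably extended to the triangle edges) lies in the cone-lattice intersection of the respective summand, applies induction to each side, and reassembles an integer cut decomposition for $x$ using the constrained arithmetic of the few cut-patterns supported on a triangle.
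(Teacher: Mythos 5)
The paper does not prove this lemma at all: it is quoted from Fu and Goddyn \cite{circuitcover}, whose argument works directly with Seymour's description of the cone (Lemma~\ref{noK5}) rather than with a graph decomposition. So your proposal is necessarily a different route, and unfortunately it has a genuine gap at the step you yourself flag as the crux. Wagner's theorem, in the form needed for \emph{all} $K_5$-minor-free graphs rather than only the edge-maximal ones, builds the graph from planar graphs and $V_8$ by clique-sums of order at most $3$ in which edges of the shared clique \emph{may be deleted} after the identification. Your plan only provides (a sketch of) closure of $\h$ under clique-sums in which the clique edges are retained, which is exactly Laurent's Lemma~\ref{kcliquesums}; it does not cover the edge-deleting variant, and there is no way to recover it afterwards because $\h$ is closed neither under edge deletion (Corollary~\ref{edel}) nor under $2$-sums (Corollary~\ref{2sum}). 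A concrete witness that the gap is real: $K_{3,3}$ is $K_5$-minor-free, $3$-connected, non-planar, not $V_8$, and triangle-free, so it is not reachable by any sequence of edge-retaining clique-sums of order at most $3$ starting from planar graphs and $V_8$ (it arises only as a $3$-sum of three copies of $K_4$ with the shared triangle deleted). Your induction therefore never establishes $K_{3,3}\in\h$, let alone the general case.

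The parts of your sketch that do work are worth noting. The planar case via duality is sound: bonds of a connected plane graph $G$ are the circuits of $G^*$, every cut is a disjoint union of bonds (so bonds and cuts generate the same cone, lattice, and integer cone), $G^*$ is planar and hence Petersen-minor-free, and the Alspach--Goddyn--Zhang theorem applies. The $V_8$ case "by inspection" is really a finite computation rather than a proof, and the $3$-clique-sum closure you gesture at is already available as Lemma~\ref{kcliquesums}. But to close the argument you would need a theorem handling $3$-sums (and $2$-sums) of planar pieces with shared clique edges deleted -- an analogue of Theorem~\ref{main} for triangles -- and nothing in the paper or in your sketch supplies that. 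The citation-based route via Fu--Goddyn (or a direct induction using Lemmas~\ref{LOC} and~\ref{noK5}, which is essentially what their matroidal circuit-cover argument amounts to) avoids the decomposition entirely.
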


Earlier, Deza~\cite{deza1}~\cite{deza2} had shown that $K_5 \in \h$ as well.

 \begin{lemma} \label{K5in}
$K_5 \in \h$.
\end{lemma}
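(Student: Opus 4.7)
The plan is to prove $\mathcal{B}(K_5)$ is a Hilbert basis by a direct inductive argument, exploiting the small size of $K_5$ ($10$ edges, $15$ bonds) and its symmetry under $S_5 = \mathrm{Aut}(K_5)$. I would first enumerate the bonds of $K_5$ as the five vertex stars $\delta(\{v\})$ of size $4$ and the ten pair cuts $\delta(\{u,v\})$ of size $6$; these are exactly the extreme rays of $\cone(\mathcal{B}(K_5))$.

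The induction is on the weight $\sum_{e \in E(K_5)} x_e$ of a vector $x \in \cone(\mathcal{B}(K_5)) \cap \lattice(\mathcal{B}(K_5))$. The inductive step reduces to exhibiting, for any nonzero such $x$, a bond $B$ of $K_5$ satisfying $B \le x$ coordinatewise and $x - B \in \cone(\mathcal{B}(K_5)) \cap \lattice(\mathcal{B}(K_5))$. Lattice membership of $x - B$ is automatic from Lemma \ref{LOC}: since $B$ meets every circuit in an even number of edges, $(x-B)(C)$ is even whenever $x(C)$ is.

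The substantive task is preserving cone membership. Since Lemma \ref{noK5} does not apply to $K_5$, the cycle (triangle) inequalities must be supplemented with the pentagonal hypermetric inequalities $\sum_{uv \in E} b_u b_v x_{uv} \le 0$ for $b \in \{\pm 1\}^{V(K_5)}$ with $\sum_v b_v = \pm 1$; together with nonnegativity these cut out $\cone(\mathcal{B}(K_5))$. Using the $S_5$-symmetry, one can reduce to a small number of canonical configurations of $x$, stratified by which facets are tight, and in each case verify that a suitable bond $B \le x$ can be subtracted while retaining all facet inequalities.

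The main obstacle will be the pentagonal facets: subtracting an arbitrary bond can violate a pentagonal inequality that $x$ satisfies with equality. The strategy is to show that whenever $x$ is tight on a pentagonal facet defined by a sign vector $b$, there exists a bond $B \le x$ which is also tight on the same facet, so that $x - B$ stays on that facet. This reduces the entire argument to a finite combinatorial check, organised by the orbits of the pair $(x,b)$ under the $S_5$-action, together with a parity/integrality analysis of how individual bonds interact with each pentagonal facet.
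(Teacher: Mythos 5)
The paper does not actually prove this lemma; it simply cites Deza's earlier work. Your proposal therefore attempts something the paper does not: a self-contained proof. The overall strategy (greedy subtraction of a bond, inducting on total weight, using the facet description of $\cone(\mathcal{B}(K_5))$ from Lemma~\ref{hyper}) is a legitimate and standard route, and your observations about lattice membership being preserved automatically and about the equivalence of the greedy condition with the Hilbert basis property are correct.

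However, there is a genuine gap: the entire content of the lemma is concentrated in the step you defer to ``a finite combinatorial check,'' and that check is never carried out. The claim that whenever $x$ is tight on a pentagonal facet there is a bond $B \le x$ tight on that same facet is asserted as a strategy, not proved; and even granting it, $B$ must simultaneously respect \emph{every} inequality that $x$ satisfies with equality, not just one pentagonal facet. For instance, if $x$ is tight for a triangle inequality $x_e \le x(C \setminus \{e\})$ and the chosen bond $B$ meets $C$ in two edges neither of which is $e$, then $x - B$ violates that inequality; so the bond must be compatible with all tight triangle facets and all tight pentagonal facets at once, and the existence of such a bond for every boundary configuration of $x$ is precisely what needs to be demonstrated. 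Until the orbit-by-orbit analysis is actually exhibited (or replaced by a computation as in the literature you would be re-deriving), the argument reduces the lemma to an unexecuted verification rather than establishing it.
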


In fact, we now give an explicit description of $\cone(\mathcal{B}(K_5))$.  

\begin{lemma}[\cite{laurent}] \label{hyper}
Let $G=K_5$ with $V(G)=\{1, \dots, 5\}$. 
The facets of $\cone(\mathcal B (K_5))$ correspond to the following 50 inequalities.
\begin{itemize}
\item
$x_e \geq 0$ for all $e \in E(G)$,  
\item
$x_e \leq x (C \setminus \{e\})$ for every 3-circuit $C$ in $G$ and all $e \in C$, and
\item
$\sum_{1 \leq i < j \leq 5} b_ib_jx_{ij} \leq 0$, for all ten permutations $b$ of the vector $(1,1,1,-1,-1)$.   
\end{itemize}
\end{lemma}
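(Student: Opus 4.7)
The plan is to prove the description of the $50$ facets in three stages:
(i) \emph{validity}---each listed inequality is satisfied by every cut $\delta(S)$;
(ii) \emph{facet-ness}---each admits a rank-$9$ family of cuts attaining equality;
(iii) \emph{completeness}---no other facet exists.
Steps (i) and (ii) are essentially direct; the chief obstacle is (iii).

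For validity, nonnegativity is immediate since each $\delta(S)$ is a $\{0,1\}$-vector. The triangle inequalities follow from the standard fact that every triangle meets every cut in either $0$ or $2$ edges, so $(\delta(S))_e \leq (\delta(S))(C\setminus\{e\})$ for every $e \in C$. For the hypermetric inequalities I would substitute $x = \delta(S)$, set $a_i := \mathbf{1}[i \in S]$, and use $(\delta(S))_{ij} = a_i + a_j - 2 a_i a_j$ to derive the identity
\[
\sum_{1 \leq i < j \leq 5} b_i b_j (\delta(S))_{ij} \;=\; B_S\,(B - B_S),
\]
where $B := \sum_i b_i$ and $B_S := \sum_{i \in S} b_i$. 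Since $b$ is a permutation of $(1,1,1,-1,-1)$, we have $B = 1$, so the right-hand side reduces to $B_S(1-B_S)$, which is $\leq 0$ for any integer $B_S$.

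For facet-ness, $\cone(\mathcal{B}(K_5))$ is $10$-dimensional, so each facet requires $9$ linearly independent cuts at which equality holds. Under the $S_5$-action on $V(K_5)$ the $50$ inequalities split into only three orbits, so it suffices to exhibit one tight rank-$9$ family per orbit: cuts that do not separate the ends of a fixed edge (for nonnegativity), cuts meeting a fixed triangle in $0$ or $2$ edges while avoiding a distinguished edge (for the triangle inequalities), and cuts $\delta(S)$ with $B_S \in \{0,1\}$ (for the hypermetric inequalities). In each case linear independence is checked by an explicit small rank computation on singleton and pair cuts.

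The main obstacle is completeness. Since $\cone(\mathcal{B}(K_5))$ has only $2^{5-1}-1=15$ nonzero extreme rays, a direct facet enumeration (for example by Fourier--Motzkin elimination) is feasible in principle. A more structural route uses the $S_5$-symmetry to organize candidate facet-defining inequalities into finitely many orbits, and then argues case-by-case, according to the combinatorial structure of the tight set of cuts, that any such inequality must, up to scaling and $S_5$-action, be one of the three listed types. This classification is the delicate part of the argument and is carried out in \cite{laurent}, which we rely upon for the final step.
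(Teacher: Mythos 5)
The paper does not prove Lemma~\ref{hyper} at all: it is imported verbatim from \cite{laurent} (the facet description of the cut cone on five vertices goes back to Deza), so there is no internal argument to compare yours against. Judged on its own terms, your proposal establishes validity correctly and completely: the identity $\sum_{i<j} b_i b_j (\delta(S))_{ij} = B_S(B-B_S)$ is right, and with $B=1$ it gives $B_S(1-B_S)\le 0$ for integer $B_S$; the triangle and nonnegativity cases are immediate as you say. One small slip in the facet-ness sketch: the cuts tight for $x_e \leq x(C\setminus\{e\})$ are those that either \emph{contain} $e$ (and hence exactly one other edge of $C$) or miss $C$ entirely; a cut ``avoiding the distinguished edge'' but meeting $C$ gives $0 \leq 2$, which is strict. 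The rank-$9$ families you want are thus the cuts through $e$ together with the cuts disjoint from $C$, and the orbit reduction under $S_5$ is a legitimate economy.

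The substantive content of the lemma, however, is completeness --- that these $50$ inequalities suffice --- and there your proposal does not supply an argument: it explicitly falls back on \cite{laurent}. So as a self-contained proof it has a gap precisely where the lemma has content. Since the paper itself only cites the result, this is not a divergence in approach, but be aware that the ``structural classification of tight sets'' you allude to is not a routine case analysis; the realistic self-contained route is the one you mention in passing, namely exact facet enumeration of the $10$-dimensional cone on its $15$ extreme rays (double description or Fourier--Motzkin), which is finite and checkable but is a computation rather than a proof idea.
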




The third type of constraint in Lemma~\ref{hyper} is called a \emph{hypermetric inequality}.
The next lemma follows  from the observation that $\cone(\mathcal B (G / e))$ is a face of $\cone(\mathcal B (G))$.
\begin{lemma}[\cite{laurent}] \label{contract}
If $G \in \h$, then $G / e \in \h$ for all $e \in E(G)$.  
\end{lemma}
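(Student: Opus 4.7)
Plan. The proof rests on the parenthetical observation: $\cone(\mathcal{B}(G/e))$ is the face $\{x_e = 0\}$ of $\cone(\mathcal{B}(G))$. Under the natural embedding $\mathbb{R}^{E(G/e)} \hookrightarrow \mathbb{R}^{E(G)}$ that inserts a $0$ in the $e$-coordinate, the cuts $\delta_{G/e}(S)$ correspond exactly to those cuts $\delta_G(T)$ with $e \notin \delta_G(T)$, i.e., those $T \subseteq V(G)$ for which the endpoints $u,v$ of $e$ lie on the same side of $T$. Since $x_e \geq 0$ is valid on $\cone(\mathcal{B}(G))$ (cuts are $\{0,1\}$-vectors), the hyperplane $\{x_e = 0\}$ defines a face, and this face is generated by exactly those cuts of $G$ lying on it---precisely the embedded $\mathcal{B}(G/e)$. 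Hence $\cone(\mathcal{B}(G/e)) = \cone(\mathcal{B}(G)) \cap \{x : x_e = 0\}$.

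I would next check the analogous lattice identity $\lattice(\mathcal{B}(G/e)) = \lattice(\mathcal{B}(G)) \cap \{x : x_e = 0\}$. The circuits of $G/e$ are (a) the circuits of $G$ not using $e$, and (b) the sets $C \setminus \{e\}$ where $C$ is a circuit of $G$ through $e$ (so that $C \setminus \{e\}$ is a $u$--$v$ path in $G \setminus e$ which becomes a circuit after contraction). A direct application of Lemma~\ref{LOC} to both graphs then shows that any $x$ with $x_e = 0$ satisfies the parity condition for $\lattice(\mathcal{B}(G))$ if and only if it satisfies the parity condition for $\lattice(\mathcal{B}(G/e))$.

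Given these two identities, take $x \in \cone(\mathcal{B}(G/e)) \cap \lattice(\mathcal{B}(G/e))$. Under the embedding $x \in \cone(\mathcal{B}(G)) \cap \lattice(\mathcal{B}(G))$ with $x_e = 0$. Since $G \in \h$, I may write $x = \sum_T n_T \delta_G(T)$ with $n_T \in \mathbb{Z}_{\geq 0}$. Evaluating at the $e$-coordinate yields $0 = \sum_{T : e \in \delta_G(T)} n_T$, which forces $n_T = 0$ whenever $e \in \delta_G(T)$. The surviving terms are all cuts of $G/e$ (under the embedding), exhibiting $x$ as an element of $\intcone(\mathcal{B}(G/e))$.

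Since the only substantive input is the hypothesis $G \in \h$, and the rest is the elementary observation that nonnegative integer coefficients of cuts which touch $e$ must vanish when the $e$-coordinate of their sum is zero, there is no serious technical obstacle. The care needed lies in verifying the face identity for cones and its analogue for lattices; both follow routinely from standard cut/circuit arguments and Lemma~\ref{LOC}.
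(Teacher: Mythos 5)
Your proof is correct and follows exactly the route the paper indicates (the paper only remarks that the lemma ``follows from the observation that $\cone(\mathcal B(G/e))$ is a face of $\cone(\mathcal B(G))$'', and your argument is the natural elaboration of that observation). One small simplification: for the final argument you only need the trivial containments that the embedded cuts of $G/e$ are cuts of $G$ with $e$-coordinate $0$, so the full lattice identity via Lemma~\ref{LOC} (which is stated for simple graphs, while $G/e$ may have parallel edges) is an unnecessary detour.
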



Another lemma of Laurent will also be used later.
Let $G_1$ and $G_2$ be two graphs both containing a clique $K$ with $n$ vertices.  The \emph{$n$-clique sum (along $K$)} of $G_1$ and $G_2$ is the graph obtained by gluing $G_1$ and $G_2$ along $K$ and keeping the set of edges of exactly one copy of $K$.  On the other hand, the \emph{$n$-sum (along $K$)} of $G_1$ and $G_2$ is the graph obtained by gluing $G_1$ and $G_2$ along $K$ and deleting the edges from both copies of $K$.  

We denote the $n$-clique sum and the $n$-sum of $G_1$ and $G_2$ (along $K$), as $G_1 \oplus_K G_2$ and $G_1 +_K G_2$ respectively.  We warn the reader that this notation is non-standard.  However, it is important for us to differentiate between $\oplus_K$ and $+_K$.  


\begin{lemma}[\cite{laurent}] \label{kcliquesums}
Let $G_1$ and $G_2$ be graphs both containing a clique $K$ with at most 3 vertices.  If $G_1$ and $G_2$ are both in $\h$, then $G_1 \oplus_K G_2 \in \h$.
\end{lemma}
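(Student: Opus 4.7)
The plan is to take any $x \in \cone(\mathcal B(G)) \cap \lattice(\mathcal B(G))$ with $G := G_1 \oplus_K G_2$, and construct an explicit nonnegative integer decomposition of $x$ by gluing together integer cut-decompositions from the two sides. First, I would show that $x_i := x|_{E(G_i)}$ lies in $\cone(\mathcal B(G_i)) \cap \lattice(\mathcal B(G_i))$ for $i=1,2$. The cone inclusion follows from the identity $\delta_G(S) \cap E(G_i) = \delta_{G_i}(S \cap V(G_i))$, which lets us restrict any cone decomposition of $x$ to one for $x_i$. The lattice inclusion follows from Lemma~\ref{LOC}: every circuit of $G_i$ is a circuit of $G$, and $x_i(C) = x(C)$ for any such $C$. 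Since $G_i \in \h$, we can then write $x_i = \sum_{S \in \mathcal S_i} \delta_{G_i}(S)$ for some multiset $\mathcal S_i$ of subsets of $V(G_i)$.

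The crux is matching the two decompositions along $K$. I would classify each $S \in \mathcal S_i$ by its \emph{$K$-pattern} $P(S) := \delta_K(S \cap V(K)) \subseteq E(K)$, and for each cut $P$ of $K$ let $m_P^i$ be the number of $S \in \mathcal S_i$ with pattern $P$. Then $\sum_P m_P^i \cdot P = x|_{E(K)}$ (reading each $P$ as its $\{0,1\}$-vector in $\mathbb R^{E(K)}$), so the vectors $(m_P^i)_P$ satisfy the same linear system for $i=1,2$. The key observation, and the only place the hypothesis $|V(K)| \le 3$ is used, is that the nonempty cut-patterns of such a $K$ are linearly independent in $\mathbb R^{E(K)}$: this is trivial for $|V(K)| \le 2$, and for $|V(K)|=3$ it is a $3 \times 3$ determinant check on the three nonzero triangle cuts. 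Consequently $m_P^1 = m_P^2 =: m_P$ for every nonempty $P$.

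For each nonempty pattern $P$, I would fix any bijection between the $m_P$ sets in $\mathcal S_1$ with pattern $P$ and the $m_P$ sets in $\mathcal S_2$ with pattern $P$. For a matched pair $(S,S')$, replace $S'$ by its complement in $V(G_2)$ if needed so that $S \cap V(K) = S' \cap V(K)$; then $T := S \cup S' \subseteq V(G)$ satisfies the vector identity
\[
\delta_G(T) = \delta_{G_1}(S) + \delta_{G_2}(S') - P
\]
in $\mathbb R^{E(G)}$, the subtraction correcting for the fact that $E(K)$ occurs only once in $E(G)$. Empty-pattern sets in each $\mathcal S_i$ are realised as cuts of $G$ individually: if $S \cap V(K) = \emptyset$ take $T := S$, and if $S \cap V(K) = V(K)$ take $T := S \cup (V(G_{3-i}) \setminus V(K))$; in either case $\delta_G(T) = \delta_{G_i}(S)$. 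Summing all of these cuts of $G$ recovers $x$ edge-by-edge: on $E(G_i) \setminus E(K)$ the only contributions come from the $G_i$-side and yield $x|_{E(G_i) \setminus E(K)}$, while on $E(K)$ the two copies of $x|_{E(K)}$ coming from $\mathcal S_1$ and $\mathcal S_2$ are corrected by $-\sum_P m_P \cdot P = -x|_{E(K)}$, leaving $x|_{E(K)}$. The main obstacle is precisely the linear-independence step; for $|V(K)|=4$ the seven nontrivial cut-patterns of $K_4$ live in a $6$-dimensional space, so the multiplicities on the two sides need not coincide and this counting-matching approach breaks down.
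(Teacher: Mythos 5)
Your proof is correct, and since the paper only quotes this lemma from Laurent without reproving it, the relevant comparison is with the standard argument there, which yours essentially reproduces: restrict $x$ to each $G_i$, decompose using $G_i\in\h$, and glue the two multisets of cuts across $K$ by matching multiplicities, the matching being forced by the linear independence of the nonzero cut vectors of a clique on at most three vertices. All the supporting identities you use (the restriction identity for cuts, $\delta_G(S\cup S')=\delta_{G_1}(S)+\delta_{G_2}(S')-P$, and the handling of the empty pattern via complementation) check out, so there is nothing to add.
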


\section{Negative Results}

In this section
we exhibit some graphs which are not in $\h$.  
For graphs with fewer than about 15 edges, membership in $\h$
can be (and were) tested with the aid of software such as 
 \emph{Normaliz}~\cite{normaliz3, normaliz4} which can recognize Hilbert bases, and compute quasi-Hilbert elements.  
 However, we stress that no proofs in this section rely on such `black box' computations.  
%

In~\cite{laurent}, it is shown that $K_6 \notin \h$, but there is an incorrect claim that all proper subgraphs of $K_6$ are in $\h$. 

\begin{theorem} \label{k6minusedge}
 $K_6 \setminus e \notin \mathscr{H}$. Furthermore, if $G$ contains a $(K_6 \setminus e)$-minor, then $G \notin \mathscr{H}$.
\end{theorem}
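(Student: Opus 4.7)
The plan is to first establish the base case $K_6 \setminus e \notin \h$ by producing an explicit quasi-Hilbert element, then upgrade to the minor statement using contraction-closure of $\h$ (Lemma~\ref{contract}).

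For the base case, I would construct an integer vector $x^* \in \mathbb{Z}^{E(K_6 \setminus e)}$ respecting the symmetry group $\mathrm{Sym}(\{1,2,3,4\}) \times \mathrm{Sym}(\{5,6\})$ stabilizing the deleted edge $e = \{5,6\}$. Cone membership, $x^* \in \cone(\mathcal{B}(K_6 \setminus e))$, would be verified by exhibiting $x^*$ explicitly as a non-negative rational combination of cuts, since Lemma~\ref{noK5} is unavailable ($K_6 \setminus e$ has a $K_5$-minor). Lattice membership, $x^* \in \lattice(\mathcal{B}(K_6 \setminus e))$, reduces by Lemma~\ref{LOC} to checking $x^*(C) \equiv 0 \pmod 2$ for each circuit $C$ in a cycle basis.

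The main obstacle is showing $x^* \notin \intcone(\mathcal{B}(K_6 \setminus e))$. Supposing a decomposition $x^* = \sum_S n_S\, \delta(S)$ with $n_S \in \mathbb{Z}_{\geq 0}$, one extracts structural constraints on the support $\{S : n_S > 0\}$ from the facet-defining inequalities of $\cone(\mathcal{B}(K_6 \setminus e))$: every hypermetric inequality inherited from a $K_5$-subgraph (Lemma~\ref{hyper}) and every cycle constraint (Lemma~\ref{noK5}) must be tight at each $\delta(S)$ in the support whenever it is tight at $x^*$. Combined with parity constraints from Lemma~\ref{LOC}, these restrictions should rule out every integer decomposition by a finite case analysis, modelled on Laurent's treatment of $K_6$ in \cite{laurent}.

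For the minor statement, suppose $G$ contains $K_6 \setminus e$ as a minor via branch sets $V_1,\ldots,V_6$ with spanning trees $T_i$ of each $V_i$. By first absorbing any remaining vertices of $V(G)$ adjacent to some $V_i$ into that $V_i$ (extending $T_i$ accordingly), we may assume each vertex of $V(G)$ lies either in some $V_i$ or in a connected component of $G$ disjoint from $\bigcup_i V_i$. Set $T := \bigcup_i T_i$. Iterating Lemma~\ref{contract}, $G \in \h$ forces $G/T \in \h$, and since $\h$ respects disjoint unions (cone, lattice, and integer cone split as direct sums), the six-vertex component of $G/T$ also lies in $\h$. Removing self-loops (absent from every cut) and identifying parallel edges (forced equal in every cut vector), both of which preserve membership in $\h$ by a straightforward projection argument, yields a simple six-vertex graph $H$ containing $K_6 \setminus e$. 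Hence $H \in \{K_6 \setminus e, K_6\}$, but neither lies in $\h$ (by the base case and by \cite{laurent}, respectively), a contradiction.
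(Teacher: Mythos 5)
Your reduction of the minor statement to the base case is sound and matches the paper's: contract the branch sets, invoke Lemma~\ref{contract}, strip loops and parallel edges (which indeed preserves membership in $\h$ by the lifting/projection argument you indicate), and observe that the resulting simple $6$-vertex graph must be $K_6$ or $K_6\setminus e$, neither of which is in $\h$. The problem is the base case, which is where all the content of the theorem lives, and there your proposal is a plan rather than a proof. You never exhibit the vector $x^*$, and the decisive step --- showing $x^*\notin\intcone(\mathcal B(K_6\setminus e))$ --- is deferred to ``a finite case analysis, modelled on Laurent's treatment of $K_6$,'' which is precisely the part that cannot be waved at: Laurent's own treatment of this graph contains the error this theorem corrects, and without a concrete $x^*$ there is no way to check that the tightness and parity constraints you list actually eliminate every nonnegative integer decomposition. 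A further soft spot: you appeal to ``the facet-defining inequalities of $\cone(\mathcal B(K_6\setminus e))$,'' but no description of that cone is available in the paper (Lemma~\ref{noK5} does not apply since $K_6\setminus e$ has a $K_5$-minor, and Lemma~\ref{hyper} only describes $\cone(\mathcal B(K_5))$); what you may legitimately use is that certain inequalities are \emph{valid} for all cuts of $K_6\setminus e$, and tightness at $x^*$ then forces tightness at every cut in the support.

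For comparison, the paper's argument is short once the right vector is chosen. It takes a specific $x$ (total edge weight $26$) that is the sum of seven explicitly listed cuts $\mathcal S$. These seven cuts are linearly independent, and $x$ is tight for seven linearly independent valid inequalities (six triangle inequalities through vertex $5$ and one hypermetric inequality with $b=(1,1,1,-1,-1)$ on the $K_5$ obtained by deleting vertex $6$), so the minimal face of the cone containing $x$ is exactly the cone over $\mathcal S$; hence any nonnegative integer decomposition of $x$ uses only cuts from $\mathcal S$. Since every cut in $\mathcal S$ has size $4$ or $8$, the total weight of $x$ would have to be divisible by $4$, but $26\equiv 2\pmod 4$. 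Your outline gestures at the first half of this (tight constraints restricting the support) but is missing both the identification of the face with a concrete generating set and the final counting contradiction, so as written the proposal does not establish $K_6\setminus e\notin\h$.
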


\begin{proof}
To show that $K_6 \setminus e \notin \h$, we 
exhibit a vector that is in $\cone(\mathcal B (K_6 \setminus e)) \cap \lattice(\mathcal B(K_6 \setminus e))$, but not in 
$\intcone(\mathcal B (K_6 \setminus e))$. 
We show such a vector $x$ in Figure~\ref{k6minuse}.  
\begin{figure}[htbp]
	\centering
	\includegraphics[keepaspectratio=true,width=3in]{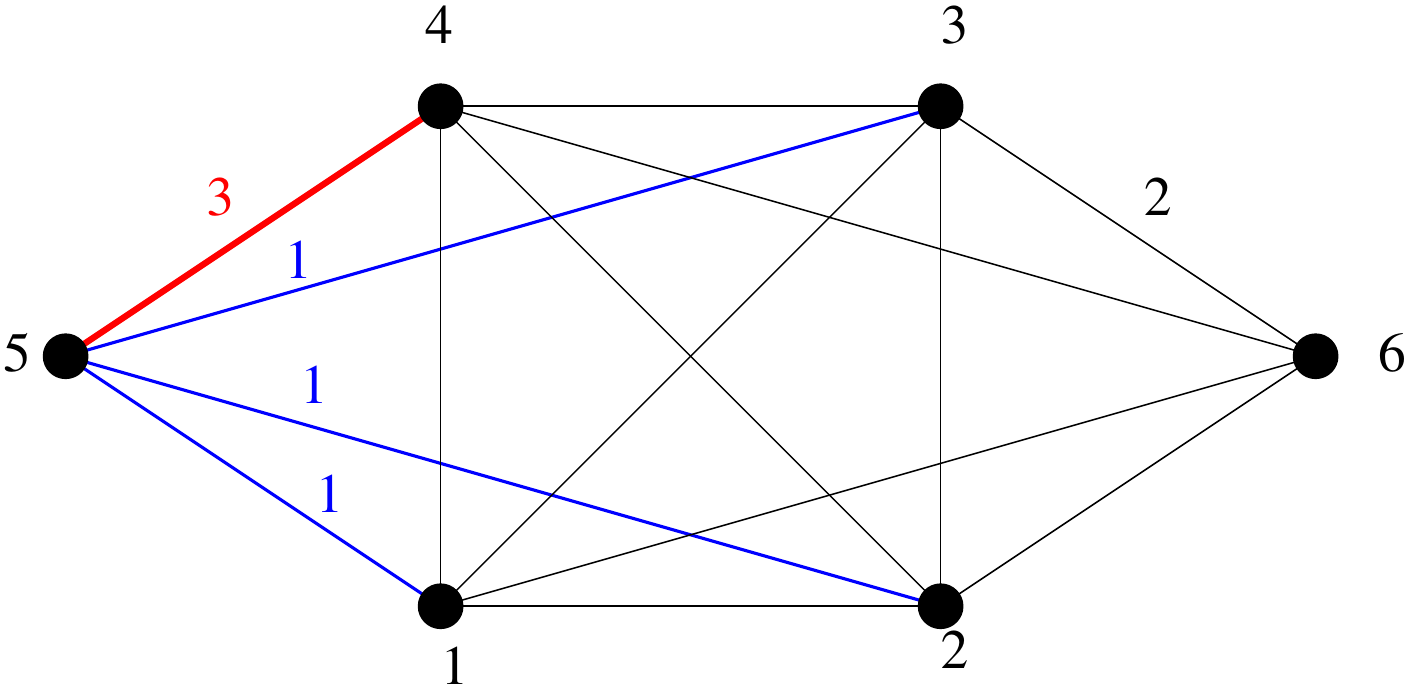} 
	\caption[Unlabelled edges all have weight 2.]{Unlabelled edges all have weight 2.}
	\label{k6minuse}
\end{figure}
The fact that $x \in \lattice(\mathcal B(K_6 \setminus e))$ follows from Lemma~\ref{LOC}. 
To see that  $x \in \cone(\mathcal B (G))$,
we observe that 
$x = \sum \mathcal S$ where
\begin{equation} 
\label{eq:coneSupport}
    \mathcal S = \{ \delta(S) : S =  \{ 1,4 \} , \{ 2,4 \}, \{ 3,4 \}, \{ 1,4,6 \}, \{ 2,4,6 \}, \{ 3,4,6 \}, \{ 6 \} \; \}.
\end{equation}
Suppose, for a contradiction, that $x$ is a positive integer combination of a set $\mathcal T$ of cuts of $K_6-e$.
One easily checks that the seven cuts in $\mathcal S$ are linearly independent (for example, consider their intersections with the edge sets
$\{ i5 : i=1,2,3 \}$ and $\{ i6 : i=1,2,3 \}$).
Therefore the cuts generate a 7-dimensional subcone $\mathcal{K} \subseteq \cone(\mathcal B (G))$
with codimension $14-7=7$.
We now verify that $x$ lies in the intersection of seven linearly-independent facet-defining inequalities for $\cone(\mathcal B (G))$.
These facets correspond to a cycle inequality for each of the triangles $\{ 1,4,5\}$, $\{2,4,5\}, \{3,4,5\}, \{1,2,5\}, \{2,3,5\}, \{1,3,5\}$, and the hypermetric inequality
of Lemma \ref{hyper} (applied to $K_6 -6$) with $b=(1,1,1,-1,-1)$.
Therefore $\mathcal{K}$ is a facet of $\cone(\mathcal B (G))$. It follows that $\mathcal T \subseteq \mathcal{K}$.
But we have that $\mathcal B(G) \cap \mathcal{K} = \mathcal S$, so $\mathcal T \subseteq \mathcal S$.
Thus $x$ is a non-negative integer combination of vectors in $S$.
Since every cut in $\mathcal S$ has size 4 or 8, the sum of the entries in $x$ must be a multiple of $4$.
However the sum of the entries of $x$ is $26 \equiv 2 \pmod 4$.
This contradiction proves that $x \notin \intcone(\mathcal B(G))$.

For the second part, suppose that $G / C \setminus D \cong K_6 \setminus e$.  Let $G_1:=G / C$, and $G_2$ be the graph obtained from $G_1$ be removing loops, parallel edges, and isolated vertices.  
Observe that  $G_1 \in \h$ if and only if $G_2 \in \h$.  However, by Lemma~\ref{contract}, if $G \in \h$, then $G_1 \in \h$, so $G_2 \in \h$.  But, $G_2$ is either $K_6$ or $K_6 \setminus e$, both of which are not in $\h$.  
\end{proof}

\begin{remark}
The exact error in~\cite{laurent} occurs in equation (10) on page 270 where $(5,6)$ is erroneously assumed to be an edge of $K_6 \setminus (5,6)$.
\end{remark}

Let $K_5^{\perp}$ be the graph in Figure~\ref{h6}, with a distinguished edge $e$.

\begin{figure}[htbp]
	\centering
	\includegraphics[keepaspectratio=true,width=2.5in]{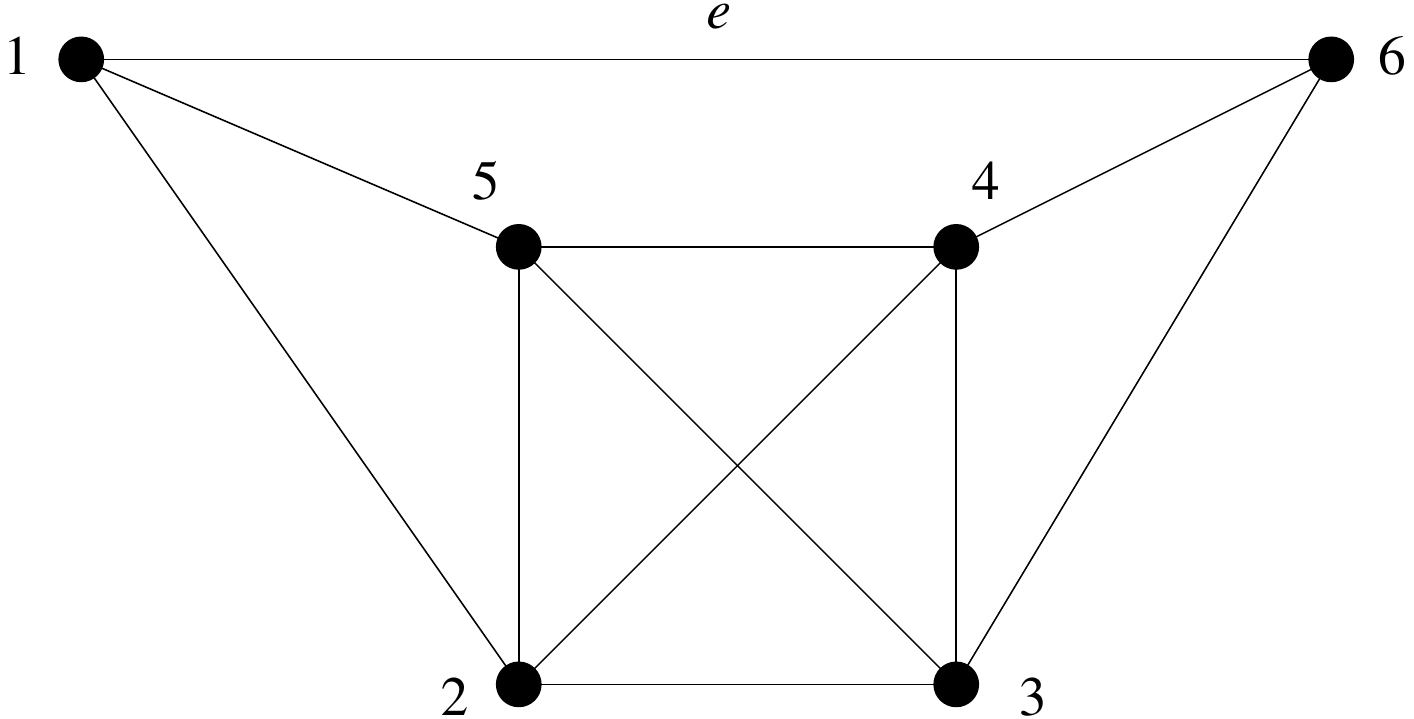}
	\caption[The graph $K_5^{\perp}$]{The graph $K_5^{\perp}$.}
	\label{h6}
\end{figure}

It is shown in~\cite{laurent} that
$K_5^{\perp} \in \h$.
On the other hand, we claim that the 2-sum $H_{10}^- := K_5^{\perp} +_e K_5^{\perp}$ 
is not in $\h$. 

\begin{lemma}\label{2h6}
$H_{10}^- \notin\h$.
\end{lemma}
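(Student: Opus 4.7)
The plan is to mirror the argument used in Theorem~\ref{k6minusedge}: construct an explicit integer vector $x \in \mathbb{Z}^{E(H_{10}^-)}$ that lies in $\cone(\mathcal B(H_{10}^-)) \cap \lattice(\mathcal B(H_{10}^-))$ but not in $\intcone(\mathcal B(H_{10}^-))$. To fix notation, label the two identified vertices of the 2-sum as $u,v$ (so both copies of the deleted edge $e=uv$ are gone), and the remaining four vertices of each copy of $K_5^\perp$ so that $u$ sees two of them, $v$ sees the other two, and the four vertices induce a $K_4$. The graph $H_{10}^-$ carries an obvious $\mathbb Z/2$ symmetry swapping the two copies, and I would exploit this to restrict the search for $x$ to invariant vectors, dramatically shrinking the search space.

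Once a candidate $x$ is in hand, the three required checks each follow a familiar template. Lattice membership reduces via Lemma~\ref{LOC} to verifying that $x(C)$ is even on a generating set of circuits, a finite mechanical verification. Cone membership is established by writing $x$ explicitly as a non-negative sum $x = \sum_{S \in \mathcal S} \delta(S)$ over a small listed family $\mathcal S$, as done in~(\ref{eq:coneSupport}). For the negative assertion, I would pin $x$ onto a face $\mathcal K$ of $\cone(\mathcal B(H_{10}^-))$ carved out by a selection of cycle inequalities on natural triangles together with the two hypermetric inequalities inherited from the two $K_5$-minors (each obtained by contracting all edges of one copy of $K_5^\perp$ to a single vertex, which collapses that copy into $v=u$ and turns the remaining copy into $K_5$; Lemma~\ref{contract} then lifts the facets described by Lemma~\ref{hyper} to facets of $\cone(\mathcal B(H_{10}^-))$). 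I would then enumerate the cuts $\delta(S) \in \mathcal K$, verify that this set equals $\mathcal S$, and close with a divisibility argument: the sizes of the cuts in $\mathcal S$ all lie in a proper additive subgroup of $\mathbb Z/k\mathbb Z$ for a suitable $k$ (most likely $k=4$, as in Theorem~\ref{k6minusedge}), while $\sum_e x_e$ fails to do so, ruling out any non-negative integer decomposition.

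The principal obstacle is the guess-and-verify step: locating a symmetric $x$ whose induced face $\mathcal K$ is small enough that $\mathcal K \cap \mathcal B(H_{10}^-) = \mathcal S$ has cuts of uniformly $k$-divisible size, yet whose total weight $\sum_e x_e$ breaks that divisibility. A secondary technical point is to make sure that the hypermetric facets of $\cone(\mathcal B(K_5))$ really do lift to facets of $\cone(\mathcal B(H_{10}^-))$ under the minor maps above; this reduces to Lemma~\ref{contract} together with the standard fact that $\cone(\mathcal B(G/F))$ is a face of $\cone(\mathcal B(G))$. Once $x$ and $\mathcal S$ are produced, all remaining verifications are routine finite checks.
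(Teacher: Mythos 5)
Your proposal is a strategy outline rather than a proof: the entire mathematical content of Lemma~\ref{2h6} resides in the explicit witness vector $x$, its conic decomposition, and the list of facets pinning it down, and none of these are produced. You yourself flag the ``guess-and-verify step'' of locating $x$ as the principal obstacle, which is precisely the step that cannot be deferred. The paper supplies a concrete $x$ (Figure~\ref{h10}) together with the decomposition $x = \tfrac{1}{2}\sum_S \delta(S)$ over eight explicitly listed vertex sets, eleven explicit triangle inequalities, and one additional facet-defining constraint vector $v$ (Figure~\ref{facet1}); without analogous data your argument does not get off the ground. Note also that the paper's cone certificate is a \emph{half}-integral combination of the eight tight cuts, placing $x$ in the relative interior of the face they span, which is what forces any integral decomposition to use only those cuts.

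A secondary issue is your proposed closing step. You plan to replicate the divisibility argument of Theorem~\ref{k6minusedge} (all tight cuts of size $\equiv 0 \pmod 4$ while $\sum_e x_e \not\equiv 0 \pmod 4$), but there is no guarantee such an arithmetic obstruction exists for the actual witness, and the paper does not use one: it instead verifies directly that $x - \delta(S) \notin \cone(\mathcal B(H_{10}^-))$ for each of the eight tight cuts $S$, which contradicts any nonnegative integral decomposition supported on the minimal face containing $x$. Your idea of lifting the hypermetric facets of the two $K_5$-minors via Lemma~\ref{contract} is reasonable in spirit (contraction of one side does yield a $K_5$-minor, and Lemma~\ref{hyper} describes its facets), but Lemma~\ref{contract} only exhibits $\cone(\mathcal B(G/F))$ as a \emph{face}, so the lifted inequalities need not be facets of $\cone(\mathcal B(H_{10}^-))$; the paper sidesteps this by writing down a single explicit valid inequality on $H_{10}^-$ itself. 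As it stands, the proposal has a genuine gap and would need the explicit witness and tight-cut enumeration to become a proof.
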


\begin{proof}
\begin{figure}[htbp]
	\centering
	\includegraphics[keepaspectratio=true,width=5.5in]{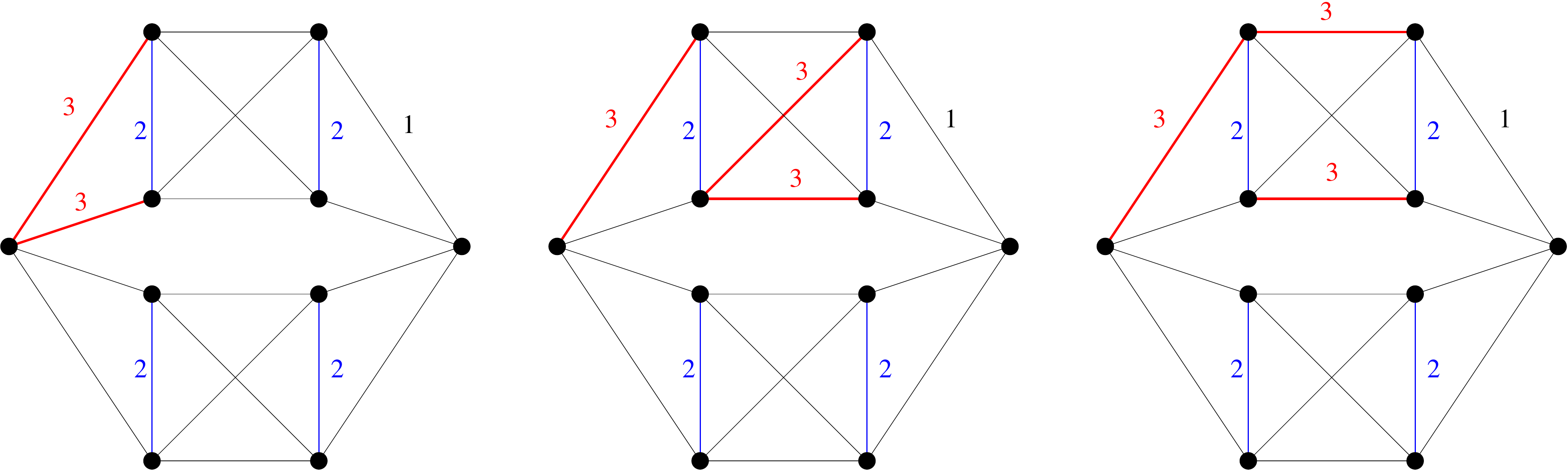} 
	\caption[Unlabelled edges all have weight 1.]{Unlabelled edges all have weight 1.}
	\label{h10}
\end{figure}

Let $x$ be the first vector given in Figure~\ref{h10}.  We show that $x \in \cone(\mathcal{B}(H_{10}^-)) \cap \lattice(\mathcal{B}(H_{10}^-))$, but $x \notin \intcone(\mathcal{B}(H_{10}^-))$.  First, $x = \frac{1}{2}\sum_S (\delta(S))$ where $S$ ranges over 
\[
\{ 1,4 \} , \{ 2,3 \}, \{ 1,3 \}, \{ 2,4 \}, \{ 5,7,9 \}, \{ 5,8,10 \}, \{ 5,7,10 \}, \{5,8,9 \},
\]
with vertices labelled as in Figure~\ref{facet1}. Thus, $x \in \cone(\mathcal B (H_{10}^-))$. It is also clear that $x \in \lattice(\mathcal B (H_{10}^-))$ by Lemma~\ref{LOC}.

\begin{figure}[htbp]
	\centering
	\includegraphics[keepaspectratio=true,width=1.8in]{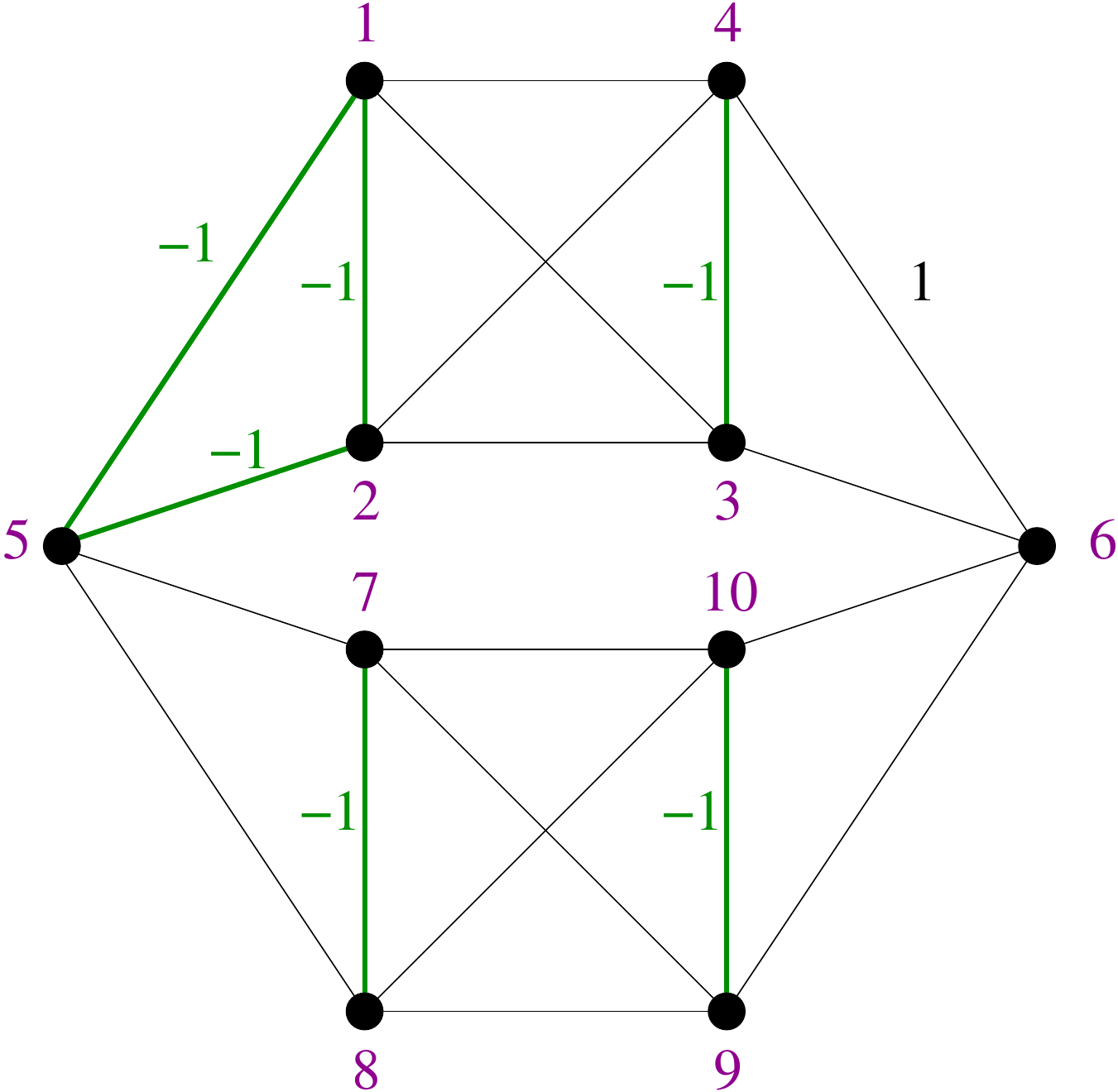}
	\caption[Unlabelled edges all have weight 1.]{The constraint vector $v$ for a facet of $H_{10}^-$. All unlabelled edges have weight 1.}
	\label{facet1}
\end{figure}

Now $x$ lies on the following facets of $\cone(\mathcal B (H_{10}^-))$. These are the cycle inequalities determined by the triangles $\{ 1,2,3 \} , \{ 1,2,4 \}, \{ 1,3,4 \}, \{ 2,3,4 \}, \{ 3,4,6 \}, \{ 5,7,8 \}, \{ 7,8,9 \}$, $\{7,8,10 \}, \{ 7,9,10\}, \{8,9,10\}, \{6,9,10\}$ and the inequality $\sum_{e\in E} v_e x_e \le 0$ where the constraint vector $v$ is  shown in Figure~\ref{facet1}.

We can check by hand which cuts are tight for all of the above facets. These are precisely the cuts $\delta(S)$ where $S$ ranges over 
\[
\{ 1,4 \} , \{ 2,3 \}, \{ 1,3 \}, \{ 2,4 \}, \{ 5,7,9 \}, \{ 5,8,10 \}, \{ 5,7,10 \}, \{5,8,9 \}.
\]
One easily checks that $x - \delta(S) \notin \cone(\mathcal B (H_{10}^-))$ for all of the above $S$. Therefore,
$x \notin \intcone(\mathcal B (H_{10}^-))$, as required.  
\end{proof}

We thus have the following corollary. 

\begin{corollary}\label{2sum}
The class $\mathscr{H}$ is not closed under 2-sums.
\end{corollary}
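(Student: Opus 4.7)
The plan is to observe that this corollary is an immediate consequence of Lemma~\ref{2h6} combined with the fact, cited just before the lemma from Laurent~\cite{laurent}, that $K_5^{\perp} \in \h$. So I would simply exhibit the 2-sum that witnesses the failure of closure.

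More precisely, I would first recall that $K_5^{\perp}$ is in $\h$, and that the graph $H_{10}^-$ of Lemma~\ref{2h6} is defined as the 2-sum $K_5^{\perp} +_e K_5^{\perp}$ along the distinguished edge $e$ indicated in Figure~\ref{h6}. Thus both summands lie in $\h$, and by Lemma~\ref{2h6} the resulting 2-sum does not. That single example shows that $\h$ is not preserved under the operation $+_e$, which is exactly the statement of the corollary.

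I would then close with a one-line remark that this should be contrasted with Lemma~\ref{kcliquesums}, which says that $\h$ \emph{is} closed under $n$-clique sums (with the edges of the clique retained) for $n \le 3$. The contrast makes clear that the obstruction is specifically the deletion of the glue edge $e$, not the gluing itself, and motivates the positive results on 2-sums promised in the introduction. There is no real obstacle here; the work has already been done in Lemma~\ref{2h6}, where the main difficulty was verifying that the vector $x$ of Figure~\ref{h10} lies on sufficiently many facets that the only candidate cuts for an integer decomposition are the eight listed cuts, and then ruling those out by a parity/residual argument.
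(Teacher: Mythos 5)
Your proposal is correct and matches the paper exactly: the corollary is stated as an immediate consequence of Lemma~\ref{2h6} together with the fact that $K_5^{\perp} \in \h$ and the definition $H_{10}^- = K_5^{\perp} +_e K_5^{\perp}$, which is precisely your argument. The paper gives no further proof, so nothing is missing.
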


We now show that $\h$ is also not closed under edge deletions or subdivisions.  
Let $H_{10} := K_5^{\perp} \oplus_e K_5^{\perp} $,  so that $H_{10}^- = H_{10} \setminus e$.
Let $H_{11}$ be the graph obtained from $H_{10}$ by subdividing $e$ once.  

\begin{lemma}
$H_{10} \in \h$, but $H_{11} \notin \h$. 
\end{lemma}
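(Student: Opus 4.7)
The first assertion, $H_{10} \in \h$, is immediate from Lemma~\ref{kcliquesums}: the graph $H_{10} = K_5^{\perp} \oplus_e K_5^{\perp}$ is the 2-clique sum of two copies of $K_5^{\perp}$ along the edge $e$, and $K_5^{\perp} \in \h$ by~\cite{laurent}. The shared structure is a $K_2$, which has at most $3$ vertices, so the hypothesis of Lemma~\ref{kcliquesums} is met.

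For $H_{11} \notin \h$, the plan is to mimic the strategy of Lemma~\ref{2h6} and Theorem~\ref{k6minusedge}: exhibit an explicit vector $y \in \cone(\mathcal{B}(H_{11})) \cap \lattice(\mathcal{B}(H_{11}))$ that fails to lie in $\intcone(\mathcal{B}(H_{11}))$. The verification will proceed in four steps: (i) write $y$ as a nonnegative rational combination of cuts of $H_{11}$; (ii) verify via Lemma~\ref{LOC} that $y(C)$ is even on every circuit $C$; (iii) identify facets of $\cone(\mathcal{B}(H_{11}))$ tight at $y$ --- nonnegativity constraints, cycle inequalities on small triangles, and hypermetric-type inequalities inherited from the two $K_5^{\perp}$ halves --- pinning down a small family $\mathcal{T}$ of cuts available in any integer decomposition of $y$; (iv) argue, as in Lemma~\ref{2h6}, that $y - \delta(S) \notin \cone(\mathcal{B}(H_{11}))$ for every $\delta(S) \in \mathcal{T}$, contradicting $y \in \intcone$.

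A natural first attempt for $y$ is to extend the vector $x$ of Lemma~\ref{2h6} by choosing values for the two new edges $f_1 = uw$, $f_2 = wv$, where $w$ is the subdivision vertex and $e = uv$ in $H_{10}$. Every circuit of $H_{11}$ through $w$ traverses both $f_1$ and $f_2$ together with a $u$-$v$ path $P$ in $H_{10}^{-}$; by Lemma~\ref{LOC} the parity of $x(P)$ is independent of the chosen path, and a short computation (e.g.\ summing $x$ along the triangle path $5$-$1$-$3$-$6$) shows this parity is odd. So Lemma~\ref{LOC} forces $y_{f_1} + y_{f_2}$ to be odd. However, inspecting how the $8$ cuts of Lemma~\ref{2h6} lift to $H_{11}$ (for each cut one chooses whether $w$ belongs to the lifted set) reveals that this specific decomposition can produce only even values of $y_{f_1} + y_{f_2}$. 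So $y$ cannot simply be a zero- or integer-extension of $x$ using the same cone decomposition: either some entries on $E(H_{10}^{-})$ must be modified, or additional cuts must enter the decomposition.

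The main obstacle is finding the correct $y$ together with the exact list of facets tight at $y$. Unlike the case of $K_5$ handled in Lemma~\ref{hyper}, no clean explicit description of the facets of $\cone(\mathcal{B}(H_{11}))$ is available, so computer assistance (e.g.\ Normaliz~\cite{normaliz3}) is essentially indispensable to discover $y$ and the tight facets, as acknowledged in the opening of Section~3. Once $y$ and $\mathcal{T}$ are in hand, steps (i)--(iv) should be verifiable by hand, exactly as in Lemma~\ref{2h6}.
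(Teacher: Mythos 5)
Your first half is exactly the paper's argument: $H_{10}=K_5^{\perp}\oplus_e K_5^{\perp}$ is a $2$-clique sum, so Lemma~\ref{kcliquesums} applies. Your parity observation that a bare lift of the decomposition of $x$ cannot work, and that extra cuts must enter, is also sound (the paper's witness indeed adds the cut $\delta(\{11\})$ of the subdivision vertex to the lifted decomposition). But your plan for showing $y\notin\intcone(\mathcal B(H_{11}))$ has a genuine gap: you never produce $y$, and you route the non-membership argument through a facet analysis of $\cone(\mathcal B(H_{11}))$ (your steps (iii)--(iv)), which you then concede requires a computer because those facets are unknown. That step, as written, cannot be carried out by hand and is not needed.

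The missing idea is a reduction to Lemma~\ref{2h6} by restriction. Choose $y$ so that its restriction $y'$ to $E(H_{11}-11)=E(H_{10}^-)$ equals the vector $x$ of Lemma~\ref{2h6} (the paper does this with $y=\tfrac12\sum_S\delta(S)$ over the eight lifted sets plus $\{11\}$, which also certifies $y\in\cone(\mathcal B(H_{11}))$; Lemma~\ref{LOC} gives $y\in\lattice(\mathcal B(H_{11}))$). Now if $y=\sum_i\delta_{H_{11}}(S_i)$ were a nonnegative integer combination, then restricting each cut to $E(H_{11}-11)$ gives $\delta_{H_{10}^-}(S_i\setminus\{11\})$, since no edge of $H_{11}-11$ is incident to vertex $11$; hence $y'=x$ would lie in $\intcone(\mathcal B(H_{10}^-))$, contradicting Lemma~\ref{2h6}. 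This one-line projection argument replaces your steps (iii)--(iv) entirely, requires no knowledge of the facets of $\cone(\mathcal B(H_{11}))$, and no computer assistance. You should reorganize the second half of your proof around constructing such a $y$ rather than around tight facets.
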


\begin{proof}
The fact that $H_{10} \in \h$ follows from Lemma~\ref{kcliquesums}.  
\begin{figure}[htbp]
	\centering
	\includegraphics[keepaspectratio=true,width=5.5in]{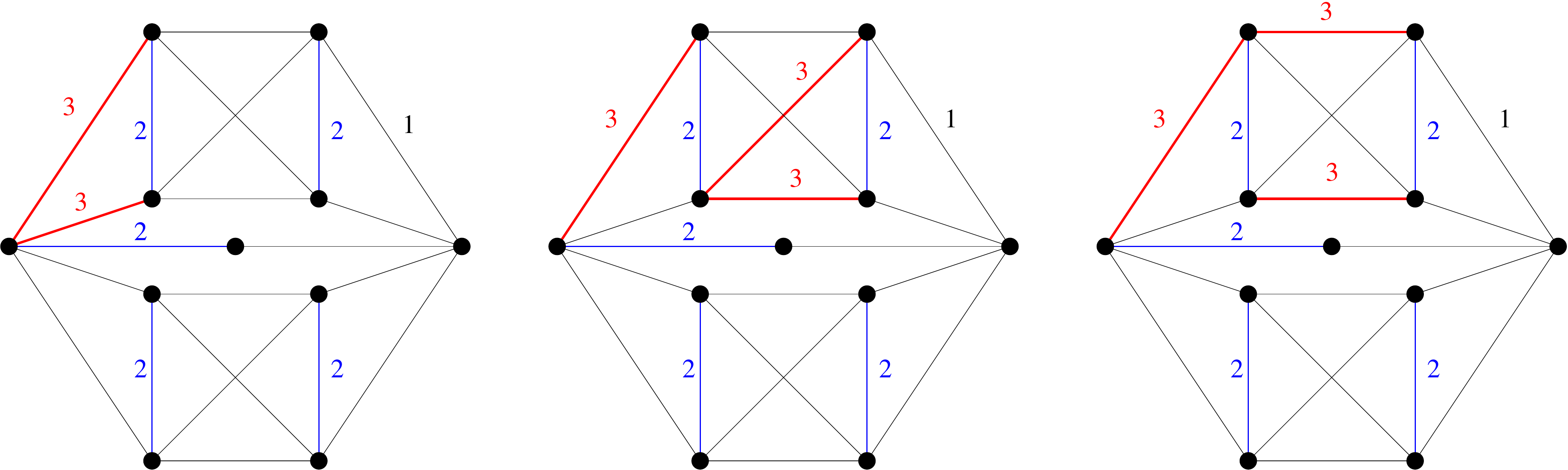} 
	\caption[Unlabelled edges all have weight 1.]{Unlabelled edges all have weight 1.}
	\label{h11}
\end{figure}
For the second part, let $y$ be the first vector given in Figure~\ref{h11}. Let the vertices of $H_{11}$ be labelled as in Figure~\ref{facet1} with the additional vertex labeled $11$. Then $y = \frac{1}{2}\sum_S (\delta(S))$ where $S$ ranges over 
\[
\{ 1,4 \} , \{ 2,3 \}, \{ 1,3 \}, \{ 2,4 \}, \{ 5,7,9 \}, \{ 5,8,10,11 \}, \{ 5,7,10,11 \}, \{5,8,9,11 \}, \{11\}.
\]
Thus, $y \in \cone(\mathcal B (H_{11}))$. It is also clear that $y \in \lattice(\mathcal B (H_{11}))$ by Lemma~\ref{LOC}.

Consider the subgraph $H_{10}^- = H_{11} - 11$, and the restriction $y'$ of $y$ to $E(H_{10}^-)$. 
If $y \in \intcone (\mathcal B (H_{11}))$, then $y' \in \intcone (\mathcal B (H_{10}^-))$.  But $y'=x$, where $x$ is the vector from the proof of Lemma~\ref{2h6}, a contradiction.  
\end{proof}

Since $H_{10}^- = H_{10} \setminus e$, we obtain the following corollary.

\begin{corollary}\label{edel}
The class $\mathscr{H}$ is not closed under edge deletions or subdivisions.
\end{corollary}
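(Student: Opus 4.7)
The first assertion, $H_{10}\in\h$, is an immediate application of Lemma~\ref{kcliquesums}: by Laurent's result we know $K_5^{\perp}\in\h$, and $H_{10}=K_5^{\perp}\oplus_e K_5^{\perp}$ is a $2$-clique-sum along an edge (a clique on two vertices), so $H_{10}\in\h$.

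For the second assertion the plan is to lift the obstruction $x$ from the proof of Lemma~\ref{2h6} to a vector $y\in\mathbb R^{E(H_{11})}$, as depicted in Figure~\ref{h11}. The two edges created by subdividing $e$ must be assigned weights so that (i) $y$ agrees with $x$ on $E(H_{10}^-)$, which embeds as $E(H_{11})$ minus the two subdivision edges, and (ii) $y$ still decomposes as a positive combination of cuts and lies in the lattice. Concretely, I would take the same family of vertex sets that realised $x$ as $\tfrac12\sum_S\delta(S)$, augment three of them by inserting the new subdivision vertex $11$, and add the singleton cut $\delta(\{11\})$ to absorb the extra contribution on the two new edges. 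Cone membership is then witnessed by the explicit half-sum; lattice membership follows from Lemma~\ref{LOC} by checking that $y(C)$ is even for each circuit $C$ of $H_{11}$, which reduces by parity to the corresponding check for $x$ on $H_{10}^-$ together with the trivial fact that any circuit through vertex $11$ uses both subdivision edges.

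The key step, and the whole reason for introducing $H_{11}$, is the contradiction: suppose $y=\sum_i n_i\,\delta_{H_{11}}(S_i)$ is a non-negative integer decomposition. Restriction to $E(H_{10}^-)=E(H_{11})\setminus\{e_1,e_2\}$, where $e_1,e_2$ are the subdivision edges, sends each cut $\delta_{H_{11}}(S_i)$ to the cut $\delta_{H_{10}^-}(S_i\cap V(H_{10}^-))$ (because edges of $H_{10}^-$ have both endpoints in $V(H_{10}^-)$, so only the part of $S_i$ in $V(H_{10}^-)$ is visible). Thus the restriction $y'$ of $y$ lies in $\intcone(\mathcal B(H_{10}^-))$. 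But by construction $y'=x$, which contradicts Lemma~\ref{2h6}. Hence $y\notin\intcone(\mathcal B(H_{11}))$, so $H_{11}\notin\h$.

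The main obstacle is not the contradiction itself, which is a clean ``delete vertex~$11$'' reduction, but the bookkeeping in step~(ii): one has to select weights on the two subdivision edges (and a multiplicity for $\delta(\{11\})$) so that \emph{both} cone and lattice membership survive the lift, while also ensuring $y|_{E(H_{10}^-)}=x$ on the nose. Once the weights displayed in Figure~\ref{h11} are chosen, the cone decomposition is exhibited directly and the lattice check via Lemma~\ref{LOC} is routine.
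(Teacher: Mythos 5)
Your argument is correct and is essentially the paper's own: $H_{10}\in\h$ by Lemma~\ref{kcliquesums}, and $H_{11}\notin\h$ by lifting the vector $x$ of Lemma~\ref{2h6} to the vector $y$ of Figure~\ref{h11} and observing that any non-negative integral cut decomposition of $y$ would restrict on $E(H_{10}^-)=E(H_{11})\setminus\{e_1,e_2\}$ to one of $x$, contradicting Lemma~\ref{2h6}. Just be sure to also record the deletion half explicitly: since $H_{10}^-=H_{10}\setminus e\notin\h$ by Lemma~\ref{2h6} while $H_{10}\in\h$, non-closure under edge deletion is immediate.
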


\begin{remark}The vectors from Figure~\ref{h10} and Figure~\ref{h11} are actually the quasi-Hilbert elements of $\mathcal{B}(H_{10}^-)$ and $\mathcal{B}(H_{11})$, respectively.  However,  we will  not need (or show) this.
\end{remark}

\section{Positive Results}

By Corollary~\ref{2sum}, $\h$ is not closed under 2-sums.  On the other hand, in this section, we show that under
some additional assumptions, performing a 2-sum does yield a graph in $\h$.  We will also give two applications of this
theorem.  

Before starting, we require a few definitions and lemmas.

\begin{definition}
 Let $G$ be a graph with a fixed edge $f$. Let $x \in \mathbb{R}^{E(G)}$ and let $x(\gamma) \in \mathbb{R}^{E(G)}$ be the vector obtained from $x$ by changing the entry $x_f$ to $\gamma$. Define the \emph{feasibility interval} $I(G,x,f)$ for $G, x$ and $f$ to be the (possibly empty) interval $[\gamma_{\min}, \gamma_{\max}]$ such that $x(\gamma) \in \cone(\mathcal B (G))$ if and only if $\gamma \in [\gamma_{\min}, \gamma_{\max}]$.
\end{definition}

\begin{definition}
 Let $G$ be a graph with a fixed edge $f$. Define a vector $x \in \mathbb{R}^{E(G)}$ to be \textit{almost in the lattice of $G$ with respect to $f$} if $x$ restricted to $E(G) \setminus \{f\}$ is in $\lattice(\mathcal B (G \setminus f))$.
\end{definition}

\begin{lemma}\label{lemma1}
 Let $G$ be a graph with a fixed edge $f$. If $G \setminus f \in \mathscr{H}$, then for every $x \in \mathbb{R}^{E(G)} $ such that $x$ is almost in the lattice with respect to $f$ and $I(G,x,f) \neq \emptyset$, there exists $\gamma \in \mathbb{Z}_{\geq 0}$ such that $x(\gamma) \in \intcone(\mathcal B (G))$.  
\end{lemma}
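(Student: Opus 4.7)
The plan is to project $x$ onto the coordinates indexed by $E(G) \setminus \{f\}$, use the hypothesis $G \setminus f \in \h$ to produce an integer decomposition of this projection into cuts of $G \setminus f$, and then lift each cut back to $G$ to recover the desired integer combination of cuts of $G$.

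First, I would exploit the assumption $I(G, x, f) \neq \emptyset$. Pick any $\gamma_0 \in I(G, x, f)$, so that $x(\gamma_0) \in \cone(\mathcal B(G))$, say $x(\gamma_0) = \sum_S c_S \delta_G(S)$ with $c_S \geq 0$. Since $\delta_G(S)$ restricted to $E(G) \setminus \{f\}$ is exactly $\delta_{G \setminus f}(S)$, restricting this decomposition shows that $x$ restricted to $E(G) \setminus \{f\}$ lies in $\cone(\mathcal B(G \setminus f))$. Combined with the hypothesis that $x$ is almost in the lattice of $G$ with respect to $f$, this restricted vector lies in both the cone and the lattice of $\mathcal B(G \setminus f)$. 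Since $G \setminus f \in \h$, I can then write
\[
x|_{E(G) \setminus \{f\}} = \sum_i c_i \, \delta_{G \setminus f}(S_i)
\]
with each $c_i \in \mathbb Z_{\geq 0}$.

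Next, I would lift each cut back to $G$. For each $S_i \subseteq V(G \setminus f) = V(G)$, the cut $\delta_G(S_i)$ agrees with $\delta_{G \setminus f}(S_i)$ on $E(G) \setminus \{f\}$, and has $f$-coordinate $1$ if the endpoints of $f$ lie on opposite sides of $S_i$, and $0$ otherwise. Setting $\gamma := \sum_{i \,:\, f \in \delta_G(S_i)} c_i$, we obtain $\sum_i c_i \, \delta_G(S_i) = x(\gamma) \in \intcone(\mathcal B(G))$, and $\gamma$ is a nonnegative integer by construction.

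The argument is essentially mechanical; its only content is the tautological observation that cuts of $G$ and cuts of $G \setminus f$ are in natural correspondence via their defining vertex partitions, with projection onto $E(G) \setminus \{f\}$ mapping $\cone(\mathcal B(G))$ onto $\cone(\mathcal B(G \setminus f))$. I do not anticipate any substantive obstacle; the lemma is in effect the observation that once the value of an entry is unconstrained modulo the cut structure, the Hilbert basis property of the smaller graph can absorb it.
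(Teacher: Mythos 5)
Your proposal is correct and follows essentially the same route as the paper: restrict $x$ to $E(G)\setminus\{f\}$, note the restriction lies in $\cone(\mathcal B(G\setminus f))\cap\lattice(\mathcal B(G\setminus f))$, invoke $G\setminus f\in\mathscr H$ to get a nonnegative integer decomposition into cuts, and lift by taking $\gamma$ to be the total coefficient of those cuts separating the ends of $f$. The only difference is that you spell out why the restriction lies in the cone (by projecting a conic decomposition of some $x(\gamma_0)$), a step the paper leaves as an observation.
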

\begin{proof}
Let $f$ be an edge of $G$ such that $G \setminus f \in \h$, $x \in \mathbb{R}^{E(G)} $ is almost in the lattice of $G$ with respect to $f$, and $I(G,x,f)$ is non-empty.  Let $G' := G \setminus f$ and $x' \in \mathbb{Z}^{E(G')}$ be the restriction of $x$ to $E(G')$.  Observe that $x' \in \cone(\mathcal B (G')) \cap \lattice(\mathcal B (G'))$. 

 Since $G' \in \mathscr{H}$, we also have $x' \in \intcone(\mathcal B (G'))$. Therefore, there exist $\alpha_S \in \mathbb{Z}_{\geq 0}$ such that 
\begin{equation*}
x' = \sum_{S \subseteq V(G)} \alpha_S \delta_{G'}(S).
\end{equation*}
Define $\mathcal{F} := \{S \subseteq V(G): f \in \delta_G(S)\}$ and set $\gamma := \sum_{S \in \mathcal{F}} \alpha_S$.  The above sum shows that $x(\gamma) \in \intcone(\mathcal B (G))$, as required.  
\end{proof}

\begin{definition}
 Let $G$ be a graph with a fixed edge $f$. We say that $G$ has the \emph{lattice endpoint property (with respect to $f$)} if for all $x \in \mathbb{R}^{E(G)}$ that is almost in the lattice with respect to $f$ and for each endpoint $\gamma$ of $I(G,x,f)$, we have $\gamma=0$ or $x(\gamma) \in \lattice(\mathcal B (G))$.  
\end{definition}

The next lemma will be useful for verifying the lattice endpoint property.

\begin{lemma} \label{evencircuit1}
Let $G$ be a graph, $f \in E(G)$, and $x \in \mathbb{R}^{E(G)}$ be almost in the lattice with respect to $f$.  If there is a circuit $C$ such that $f \in C$ and $x(C)$ is even, then $x \in \lattice(G)$.  
\end{lemma}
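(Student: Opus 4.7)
The plan is to reduce the claim to Lemma~\ref{LOC}: I want to show that $x(C')$ is even for \emph{every} circuit $C'$ of $G$, and then conclude $x \in \lattice(\mathcal{B}(G))$. Note first that the hypothesis ``almost in the lattice with respect to $f$'' forces $x_e \in \mathbb{Z}$ for all $e \neq f$, and together with $x(C)$ being an even integer this forces $x_f \in \mathbb{Z}$ as well, so $x \in \mathbb{Z}^{E(G)}$ and the condition of Lemma~\ref{LOC} is meaningful.

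Next I would split into two cases according to whether $f \in C'$. If $f \notin C'$, then $C'$ is a circuit of $G \setminus f$, and since $x$ restricted to $E(G) \setminus \{f\}$ lies in $\lattice(\mathcal{B}(G \setminus f))$, Lemma~\ref{LOC} applied to $G \setminus f$ gives $x(C')$ even immediately.

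For the remaining case $f \in C'$, the main idea is to combine $C'$ with the given circuit $C$ via symmetric difference. Since $f \in C \cap C'$, the symmetric difference $C \triangle C'$ does not contain $f$, so it is a cycle (in the sense of the binary cycle space) of $G \setminus f$, i.e.\ an edge-disjoint union of circuits $C_1, \dots, C_k$ of $G \setminus f$. By the previous case, $x(C_i)$ is even for each $i$, hence $x(C \triangle C')$ is even. The identity
\[
x(C) + x(C') \;=\; x(C \triangle C') + 2\,x(C \cap C')
\]
together with the hypothesis that $x(C)$ is even then gives that $x(C')$ is even, completing the case analysis.

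I do not anticipate a serious obstacle: the only subtle point is the standard fact that the symmetric difference of two circuits is an edge-disjoint union of circuits (i.e.\ an element of the cycle space), and the crucial observation that $f$ cancels out of $C \triangle C'$ because it lies in both. Once those are in place, Lemma~\ref{LOC} delivers $x \in \lattice(\mathcal{B}(G))$.
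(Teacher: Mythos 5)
Your proposal is correct and is essentially the paper's own argument: both hinge on the identity $x(C)+x(C')=x(C\triangle C')+2x(C\cap C')$, the fact that $f$ cancels from $C\triangle C'$ so its constituent circuits lie in $G\setminus f$, and Lemma~\ref{LOC}. The only difference is that you argue directly (every circuit has even $x$-sum) while the paper argues by contradiction from a circuit with odd sum; the content is the same.
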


\begin{proof}
Suppose not.  Then there is a circuit $C'$ containing $f$ such that $x(C')$ is odd.  But now,
\[
x(C \Delta C')=x(C)+x(C')-2x(C \cap C') \equiv 1 \pmod 2.
\]
Thus $C \Delta C'$ contains a circuit $C''$ such that $x(C'')$ is odd.  Since $f \notin C''$, this contradicts the fact that $x \in \mathbb{R}^{E(G)}$ is almost in the lattice with respect to $f$,   
\end{proof}

\begin{lemma} \label{noK5LEP}
All simple $K_5$-minor-free graphs have the lattice endpoint property with respect to every edge.
\end{lemma}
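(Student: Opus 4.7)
The plan is to combine Seymour's polyhedral description (Lemma~\ref{noK5}) of $\cone(\mathcal B(G))$ with the parity criterion of Lemma~\ref{evencircuit1}. Because $G$ is $K_5$-minor-free, the facet-defining inequalities of $\cone(\mathcal B(G))$ are only nonnegativity and the cycle constraints. Among these, the ones actually involving $x_f$ come in three families: $x_f \geq 0$; for each circuit $C$ through $f$, $x_f \leq x(C \setminus \{f\})$; and for each circuit $C$ containing $f$ together with another edge $e$, $x_f \geq x_e - x(C \setminus \{e,f\})$. These three families determine $I(G,x,f) = [\gamma_{\min}, \gamma_{\max}]$. Moreover, since $x$ is almost in the lattice with respect to $f$, the entries $x_e$ for $e \neq f$ are integers, so every finite endpoint of $I(G,x,f)$ is itself an integer, and $x(\gamma)$ remains almost in the lattice with respect to $f$ for every choice of $\gamma$.

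I would then analyze each finite endpoint separately. Suppose $\gamma_{\max}$ is finite; then it is achieved by some upper cycle constraint, so $\gamma_{\max} = x(C \setminus \{f\})$ for some circuit $C \ni f$. Summing along $C$ gives $x(\gamma_{\max})(C) = \gamma_{\max} + x(C \setminus \{f\}) = 2\, x(C \setminus \{f\})$, which is even, so Lemma~\ref{evencircuit1} immediately yields $x(\gamma_{\max}) \in \lattice(\mathcal B(G))$. For the lower endpoint: if $\gamma_{\min} = 0$ there is nothing to prove; otherwise a constraint of the third type is tight, giving $\gamma_{\min} = x_e - x(C \setminus \{e,f\})$ for some circuit $C$ containing both $f$ and a distinct edge $e$. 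Then $x(\gamma_{\min})(C) = \gamma_{\min} + x_e + x(C \setminus \{e,f\}) = 2 x_e$ is again even, and Lemma~\ref{evencircuit1} delivers $x(\gamma_{\min}) \in \lattice(\mathcal B(G))$.

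The main obstacle is mostly bookkeeping: correctly identifying which inequalities of Lemma~\ref{noK5} produce each endpoint, and handling the degenerate situation where $f$ is a bridge (so $\gamma_{\max} = +\infty$ and no finite upper endpoint exists, leaving only $\gamma_{\min} = 0$ to check). Once the circuit responsible for a finite endpoint is isolated, the doubled-sum identity along that circuit makes the parity check from Lemma~\ref{evencircuit1} automatic, which is the real engine of the proof.
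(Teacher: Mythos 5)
Your proposal is correct and follows essentially the same route as the paper: use Lemma~\ref{noK5} to identify the tight constraint at each endpoint, observe that a tight cycle constraint forces $x(\gamma)(C)$ to be even for a circuit $C$ through $f$, and conclude via Lemma~\ref{evencircuit1}. You simply spell out the parity computation (the doubled-sum identity) that the paper leaves implicit.
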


\begin{proof}
Let $G$ be a simple $K_5$-minor-free graph, $f \in E(G)$, $x \in \mathbb{R}^{E(G)}$ be almost in the lattice with respect to $f$, and $\gamma$ be an endpoint of $I(G,x,f)$.  Since $\gamma$ is an endpoint, there is a constraint involving $x_f$ for which $x(\gamma)$ is tight for. By Lemma~\ref{noK5}, such a constraint must be a non-negativity constraint or a cycle constraint.  If it is a non-negatively constraint, then we have $\gamma=0$, as required.  So, we may assume that $x(\gamma)$ satisfies some cycle constraint with equality.  In particular, this implies that there is a circuit $C$ of $G$ such that $f \in C$ and $x(C)$ is even.  By Lemma~\ref{evencircuit1}, $x(\gamma) \in \lattice(G)$, as required.
\end{proof}

\begin{lemma} \label{k5LEP}
$K_5$ has the lattice endpoint property with respect to every edge.
\end{lemma}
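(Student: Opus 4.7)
The plan is to follow the strategy of Lemma~\ref{noK5LEP} but to handle the additional facet type that Lemma~\ref{hyper} gives for $K_5$, namely the hypermetric inequalities. Fix $f \in E(K_5)$, let $x \in \mathbb{R}^{E(K_5)}$ be almost in the lattice with respect to $f$, and let $\gamma$ be an endpoint of $I(K_5,x,f)$. Some facet-defining inequality of $\cone(\mathcal B(K_5))$ in which the coefficient of $x_f$ is nonzero must be tight at $x(\gamma)$. If it is a non-negativity constraint then $\gamma=0$ and we are done; otherwise by Lemma~\ref{hyper} it is a triangle constraint or a hypermetric constraint, and since both have coefficient $\pm 1$ on $x_f$ while the remaining entries of $x$ are integers, $\gamma$ is itself an integer and every entry of $x(\gamma)$ is an integer. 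The triangle case copies Lemma~\ref{noK5LEP}: the tight triangle $C$ contains $f$ and satisfies $x(C)=2x(C\setminus\{e\})\equiv 0\pmod{2}$, so Lemma~\ref{evencircuit1} yields $x(\gamma)\in\lattice(\mathcal B(K_5))$.

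The real work is the hypermetric case. Say the tight inequality is $\sum_{i<j}b_ib_jx_{ij}\le 0$ for some permutation $b$ of $(1,1,1,-1,-1)$, and relabel vertices so that $f=12$. The four edges with $b_ib_j=+1$ and the six edges with $b_ib_j=-1$ have equal $x$-sums at tightness, so $x(K_5)$ equals twice either sum and is hence even. This is the only consequence of the hypermetric inequality we will use, and it is independent of the specific $b$.

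Now feed this into the almost-in-lattice hypothesis through Lemma~\ref{LOC}: every triangle of $K_5$ that avoids $f$ has even $x$-weight. For $k\in\{4,5\}$, adding the triangles $\{1,3,k\}$ and $\{2,3,k\}$ modulo $2$ cancels $x_{3k}$ and gives $x_{13}+x_{23}\equiv x_{1k}+x_{2k}\pmod{2}$; the same manoeuvre with $\{1,4,5\}$ and $\{2,4,5\}$ handles the remaining pair. Hence $s:=x_{1k}+x_{2k}$ has a common value modulo $2$ for all $k\in\{3,4,5\}$. The triangle $\{3,4,5\}$ also avoids $f$, so $x_{34}+x_{35}+x_{45}\equiv 0\pmod{2}$. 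Expanding
\[
x(K_5)=\gamma+\sum_{k=3}^{5}(x_{1k}+x_{2k})+(x_{34}+x_{35}+x_{45})\equiv \gamma+3s+0\equiv \gamma+s\pmod{2}
\]
and using $x(K_5)\equiv 0$ gives $\gamma\equiv s$. For any $k\in\{3,4,5\}$ the triangle $\{1,2,k\}$ contains $f$ and has $x(\gamma)$-weight $\gamma+x_{1k}+x_{2k}\equiv\gamma+s\equiv 0\pmod{2}$, and Lemma~\ref{evencircuit1} finishes. The main obstacle is the hypermetric case; the slick move is to notice that tightness of a hypermetric at $x(\gamma)$ forces $x(K_5)$ to be even, and then to use the $f$-avoiding triangle parities to translate that global evenness into an $f$-containing triangle of even weight.
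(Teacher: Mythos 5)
Your proof is correct and follows essentially the same route as the paper: reduce to the hypermetric case, observe that tightness of a hypermetric inequality forces $x(\gamma)$ to be integral with $x(\gamma)(E(K_5))$ even, then combine this with the parities of the $f$-avoiding circuits to exhibit an $f$-containing circuit of even weight and invoke Lemma~\ref{evencircuit1}. The only difference is in the final bookkeeping: where you run a parity chase over the triangles avoiding $f$, the paper simply partitions $E(K_5)$ into two Hamiltonian circuits $C$ and $C'$ with $f \in C$, so that $x(\gamma)(C) = x(\gamma)(E(K_5)) - x(\gamma)(C')$ is even in one line.
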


\begin{proof}
Let $f \in E(K_5)$ and $x \in \mathbb{R}^{E(K_5)}$ be almost in the lattice with respect to $f$, and $\gamma$ be an endpoint of $I(K_5,x,f)$.  
As in Lemma \ref{noK5LEP}, if $x(\gamma)$ is tight 
for a non-negativity constraint or a cycle constraint we are done.  Thus, by Lemma~\ref{hyper} we may assume that $x(\gamma)$ is tight for a hypermetric inequality determined
by a permutation of the vector $b=(1,1,1,-1,-1)$.  In particular, $x(\gamma)$ is integer-valued and $x(\gamma)(E(K_5))$ is even.  
The edges of $K_5$ may be partitioned into two circuits $C, C'$
and we may assume $f \in C$. 
Since $x$ is almost in the lattice with respect to $f$,  we have that $x(\gamma)(C')$ is even.  
It follows that 
\[
x(\gamma)(C) = x(\gamma)(E(K_5)) - x(\gamma)(C') \equiv 0 - 0 \pmod 2.
\]
But now $x(\gamma) \in \lattice(G)$ by Lemma~\ref{evencircuit1}, and we are done.
\end{proof}

We can now state and prove the main result of this section.

\begin{theorem}\label{main}
Let $G_1$ and $G_2$ be 2-connected graphs with $E(G_1) \cap E(G_2) = f$. 
If  the four graphs $G_1$, $G_2$, $G_1 \setminus f$, $G_2 \setminus f$ are in $\mathscr{H}$, 
and $G_1$ has the lattice endpoint property with respect to $f$, then $G_1 +_f G_2 \in \mathscr{H}$.
\end{theorem}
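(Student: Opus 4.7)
The plan is to show that every $x \in \cone(\mathcal{B}(G)) \cap \lattice(\mathcal{B}(G))$ lies in $\intcone(\mathcal{B}(G))$, where $G := G_1 +_f G_2$; denote the endpoints of $f$ by $u$ and $v$. For $\gamma \in \mathbb{R}$, let $x_i(\gamma) \in \mathbb{R}^{E(G_i)}$ be the vector that agrees with $x$ on $E(G_i) \setminus \{f\}$ and has entry $\gamma$ on $f$, and let $I_i := \{\gamma : x_i(\gamma) \in \cone(\mathcal{B}(G_i))\}$. I would find a common $\gamma \in \mathbb{Z}_{\geq 0}$ such that $x_i(\gamma) \in \intcone(\mathcal{B}(G_i))$ for $i = 1, 2$, and then glue the two integer decompositions into one on $G$.

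First, $x \in \cone(\mathcal{B}(G))$ forces $I_1 \cap I_2 \neq \emptyset$: summing, over any cone-decomposition of $x$, the coefficients of the cuts whose defining vertex subset separates $u$ from $v$ produces a common $\gamma \in I_1 \cap I_2$. Next, $x \in \lattice(\mathcal{B}(G))$ implies each $x_i(0)$ is almost in the lattice of $G_i$ with respect to $f$, so by Lemma~\ref{evencircuit1} there is a parity $q_i \in \{0,1\}$ with $x_i(\gamma) \in \lattice(\mathcal{B}(G_i))$ if and only if $\gamma \equiv q_i \pmod{2}$. Considering circuits of $G$ formed by pasting a $u$--$v$ path in $G_1 \setminus f$ to one in $G_2 \setminus f$, the assumption $x \in \lattice(\mathcal{B}(G))$ forces $q_1 = q_2 =: q$.

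The crux is producing $\gamma \in I_1 \cap I_2 \cap \mathbb{Z}_{\geq 0}$ with $\gamma \equiv q \pmod 2$. Write $I_i = [a_i, b_i]$. By the lattice endpoint property of $G_1$, each of $a_1, b_1$ lies in $\{0\} \cup \{n \in \mathbb{Z}_{\geq 0} : n \equiv q \pmod 2\}$, and by Lemma~\ref{lemma1} applied to $G_i \setminus f \in \mathscr{H}$ each $I_i$ contains a parity-$q$ nonnegative integer. A case analysis on how $I_1$ and $I_2$ overlap then yields $\gamma$. If one interval contains the other, Lemma~\ref{lemma1} on the smaller interval produces $\gamma$ already inside the intersection. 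If the intervals overlap partially, exactly one endpoint of $I_1$ lies in $I_1 \cap I_2$: by the lattice endpoint property this endpoint is either a parity-$q$ integer (taken as $\gamma$) or $0$. If it is $0$ and $q = 0$, set $\gamma = 0$; if it is $0$ and $q = 1$, Lemma~\ref{lemma1} applied to $G_2$ supplies an odd integer $\gamma^* \geq 1$ in $I_2$, which automatically lies in $I_1$ as well. The remaining degenerate possibility, $I_1 = \{0\}$ with $q = 1$, cannot arise, for Lemma~\ref{lemma1} applied to $G_1$ would then produce $\gamma = 0$ with $x_1(0) \in \intcone(\mathcal{B}(G_1))$, forcing $q = 0$.

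Given such $\gamma$, write $x_i(\gamma) = \sum_S \alpha^i_S \delta_{G_i}(S)$ with $\alpha^i_S \in \mathbb{Z}_{\geq 0}$, possible since $G_i \in \mathscr{H}$. The total $f$-weight equals $\gamma$ on both sides, so a transportation argument pairs up the cuts $\delta_{G_1}(S)$ that contain $f$ with the cuts $\delta_{G_2}(T)$ that contain $f$ in a $\{u,v\}$-orientation-matching way; each such pair merges (after deleting the two copies of $f$) into a single cut of $G$, while cuts on either side that avoid $f$ extend to cuts of $G$ by appending zeros on the other side. This produces the required nonnegative integer combination of cuts of $G$ equal to $x$. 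The main difficulty will be the case analysis above: the asymmetric combination of the lattice endpoint property on $G_1$ with Lemma~\ref{lemma1} on $G_2$ is precisely what pins down a suitable $\gamma$ in the intersection.
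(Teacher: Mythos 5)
Your proposal is correct and follows essentially the same route as the paper's proof: the same feasibility intervals $I_1,I_2$, the same use of Lemma~\ref{lemma1} on the contained-interval cases and of the lattice endpoint property of $G_1$ on the partial-overlap case, and the same final pairing of $f$-crossing cuts (your parity classes $q_1=q_2$ are just a restatement of the paper's claim that $x_1(\gamma)\in\lattice(\mathcal B(G_1))$ iff $x_2(\gamma)\in\lattice(\mathcal B(G_2))$). The only cosmetic difference is that your ``endpoint equals $0$'' subcase is vacuous, since in a genuine partial overlap the endpoint of $I_1$ lying in $I_2$ is automatically nonzero, which is how the paper phrases it.
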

\begin{proof}
 Let $G := G_1 +_f G_2$ with $f := uv$. Suppose $x \in \cone(\mathcal B (G)) \cap \lattice(\mathcal B (G))$. 
For $\gamma \in \mathbb{R}$ and $i \in \{1,2\}$, we define $x_i(\gamma) \in \mathbb{R}^{E(G_i)}$ as  $x_i(\gamma)_{e} := x_{e}$ if $e \neq f$ and $x_i(\gamma)_{f} := \gamma$. 
 
 \begin{claim}
For all $\gamma$, we have $x_1(\gamma) \in \lattice(\mathcal B (G_1))$ if and only if $x_2(\gamma) \in \lattice(\mathcal B (G_2))$. 
\end{claim}
\begin{subproof}
Suppose, without loss of generality, that $x_1(\gamma) \in \lattice(\mathcal B (G_1))$ and $x_2(\gamma) \notin \lattice(\mathcal B (G_2))$.  
Then $\gamma$ is an integer, and $G_2$ contains a circuit $C_2$ with $ x_2(\gamma)(C_2)$ odd.
Since $x \in \lattice(\mathcal B (G))$, we have $f \in C_2$.
Because $G_1$ is 2-connected, there is a circuit $C_1$ in $G_1$ with $f \in C_1$.
We have that $x_1(\gamma)(C_1)$ is an even integer, since $x_1(\gamma) \in \lattice(\mathcal B (G_1))$. Now $C_1 \triangle C_2$ is a circuit in $G$ with 
\[
x(C_1 \triangle C_2)=x_1(\gamma)(C_1)+x_2(\gamma)(C_2)-2\gamma \equiv 1 \pmod{2},
\]
contradicting $x \in \lattice(\mathcal B (G))$ and proving the claim.  
\end{subproof}  

\medskip

Since $x \in \cone(\mathcal B (G))$, there exist non-negative coefficients $\beta_S$ such that
 \[ x = \sum_{S \subseteq V(G)} {\beta_{S} \delta_G(S)}.  \]
Let $\mathcal{F}:=\{S \subseteq V(G) : |S \cap \{u,v\}|=1\}$ and define $\gamma' := \sum_{S \in \mathcal{F}} \beta_S.$  Consider the intervals $I_1 := I(G_1, x_1(\gamma'),f)$ and $I_2 := I(G_2, x_2(\gamma'),f)$.  Note that  
$\gamma' \in I_1 \cap I_2 \neq \emptyset$.

\begin{claim}
There exists $\gamma \in I_1 \cap I_2$ such that $x_1(\gamma) \in \lattice(\mathcal B (G_1))$ and $x_2(\gamma) \in \lattice(\mathcal B (G_2))$.  
\end{claim}
\begin{subproof}
 By Lemma \ref{lemma1}, there exists $\gamma_1 \in I_1$ such that $x_1(\gamma_1) \in \intcone(\mathcal B (G_1))$. 
If $I_1 \subseteq I_2$, we have $\gamma_1 \in I_2$.  
By the previous claim, $x_2(\gamma_1) \in \lattice(\mathcal B (G_2))$ and the claim is proved with $\gamma = \gamma_1$.
Thus, we may assume that $I_1 \nsubseteq I_2$
and similarly, that $I_2 \nsubseteq I_1$.  
Since $I_1 \cap I_2 \neq \emptyset$, there exists a non-zero endpoint $\gamma_2$ of $I_1$ such that $\gamma_2 \in I_2$.  
As $G_1$ has the lattice endpoint property with respect to $f$, it follows that $x_1(\gamma_2) \in \lattice(\mathcal B (G_1))$.  
By the previous claim, $x_2(\gamma_2) \in \lattice(\mathcal B (G_2))$, so the claim is proved with $\gamma=\gamma_2$.  
\end{subproof}


Let $\gamma$ be as above.  Since $G_i \in \mathscr{H}$, we have $x_i(\gamma) \in \intcone(\mathcal B (G_i))$. Thus, there exists a sequence $A_1, \dots, A_j$ of subsets of $V(G_1)$ and a sequence of subsets $B_1, \dots, B_k$ of $V(G_2)$ such that  $x_1(\gamma) = \sum_{i=1}^j \delta_{G_1}(A_i)$ and  $x_2(\gamma) = \sum_{i=1}^k \delta_{G_2}(B_i)$.   By taking complements if necessary, we may assume for each $i \in [j]$, $A_i \cap \{u,v\}=\emptyset$ or $A_i \cap \{u,v\}=\{u\}$ and that for all $i \in [k]$, $B_i \cap \{u,v\}=\emptyset$ or $B_i \cap \{u,v\}=\{u\}$.  Therefore, by re-indexing, we may assume $A_i \cap \{u,v\} = \{u\}$ for $i \in \{1, \dots, \gamma\}$ and $A_i \cap \{u,v\}=\emptyset$ for $i \in \{\gamma+1, \dots, j\}$.  Similarly, $B_i \cap \{u,v\} = \{u\}$ for $i \in \{1, \dots, \gamma\}$ and $B_i \cap \{u,v\}=\emptyset$ for $i \in \{\gamma+1, \dots, k\}$.  
Observe that $\delta_{G_1}(A_i) = \delta_G(A_i)$ for $i > \delta$ and $\delta_{G_2}(B_i) = \delta_G(B_i)$ for $i > \delta$.  Therefore, 
\[ x =  \sum_{i=1}^{\gamma} \delta_{G}(A_i \cup B_i) + \sum_{i=\gamma+1}^j \delta_G(A_i) + \sum_{i=\gamma+1}^{k} \delta_G(B_i). \]
So, $x \in \intcone(\mathcal B (G))$, as required.  
\end{proof}

Using Theorem~\ref{main}, we obtain the following connection between subdivision and deletion.

\begin{theorem}\label{delsubdiv}
Let $G \in \h$ and $f \in E(G)$. Then $G \setminus f \in \mathscr{H}$ if and only if $G +_{f} C_n \in \mathscr{H}$ for all $n \geq 4$.
\end{theorem}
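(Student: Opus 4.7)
The plan is to treat the two directions separately. For the forward direction, I would apply Theorem~\ref{main} with $G_1 := C_n$ and $G_2 := G$ along the common edge $f$. Since $C_n$ is 2-connected, simple, and $K_5$-minor-free, it has the lattice endpoint property with respect to $f$ by Lemma~\ref{noK5LEP}. By Lemma~\ref{noK5in}, both $C_n$ and the path $C_n \setminus f$ lie in $\h$, and by hypothesis so do $G$ and $G \setminus f$. After reducing via the standard block decomposition of $\h$-membership to the 2-connected block of $G$ containing $f$ (the bridge case being handled directly since $\h$-membership distributes over 1-sums), Theorem~\ref{main} yields $G +_f C_n = C_n +_f G \in \h$ for every $n \geq 4$.

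For the backward direction, fix a single $n \geq 4$ (any such $n$ will suffice), let $H := G +_f C_n$, and label the subdivision path $u = w_0, w_1, \ldots, w_{n-1} = v$ with edges $e_i := w_{i-1}w_i$. Given $x \in \cone(\mathcal B(G \setminus f)) \cap \lattice(\mathcal B(G \setminus f))$, I would extend $x$ to $x' \in \mathbb{R}^{E(H)}$ by picking non-negative integer values $a_i := x'_{e_i}$. If $x' \in \cone(\mathcal B(H)) \cap \lattice(\mathcal B(H))$, then $H \in \h$ gives $x' \in \intcone(\mathcal B(H))$; restricting any non-negative integer cut decomposition of $x'$ to $E(G \setminus f) \subseteq E(H)$ recovers $x$ as a non-negative integer combination of cuts of $G \setminus f$, so $x \in \intcone(\mathcal B(G \setminus f))$.

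To select the $a_i$'s, I will use a 2-sum cone decomposition (essentially the computation in the proof of Theorem~\ref{main}): $x' \in \cone(\mathcal B(H))$ if and only if there exists $\mu \geq 0$ such that setting the $f$-entry of $x$ to $\mu$ yields a vector in $\cone(\mathcal B(G))$, while $(a_1, \ldots, a_{n-1}, \mu)$ lies in $\cone(\mathcal B(C_n))$. Such a $\mu$ exists because the projection of $\cone(\mathcal B(G))$ onto the coordinates $E(G) \setminus \{f\}$ equals $\cone(\mathcal B(G \setminus f))$, which contains $x$. Since $C_n$ is $K_5$-minor-free, Lemma~\ref{noK5} reduces the second condition to the cycle inequalities $a_i \leq \mu + \sum_{j \neq i} a_j$ (for each $i$) and $\mu \leq \sum_i a_i$, together with non-negativity. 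Lattice membership of $x'$ additionally requires, by Lemma~\ref{LOC}, that $\sum_i a_i \equiv p \pmod 2$, where $p \in \{0,1\}$ is the common parity of $x(P)$ over $u$-$v$ paths $P$ in $G \setminus f$ (well-defined by the symmetric-difference argument from Lemma~\ref{evencircuit1}, and vacuous if $f$ is a bridge).

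The main obstacle is realising such $a_i$'s: finding non-negative integers satisfying both the cycle inequalities and the parity constraint simultaneously. This is precisely where $n \geq 4$ is used — it ensures $n - 1 \geq 3$ subdivision edges and hence sufficient flexibility. Concretely, I would pick $A := \sum_i a_i$ to be an integer with $A \equiv p \pmod 2$ and $A \geq \max(\lceil \mu \rceil, 6)$, then distribute $A$ as evenly as possible among the $a_i$'s; each $a_i \leq \lceil A/(n-1)\rceil \leq (A + \mu)/2$, so all cycle inequalities hold, completing the construction.
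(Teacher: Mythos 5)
Your proposal is correct and follows essentially the same route as the paper: the forward direction is the identical application of Theorem~\ref{main} with $C_n$ as the $K_5$-minor-free side carrying the lattice endpoint property, and the backward direction extends a witness $x \in \cone(\mathcal B(G \setminus f)) \cap \lattice(\mathcal B(G \setminus f)) \setminus \intcone(\mathcal B(G \setminus f))$ across the subdivision path and derives a contradiction by restricting an integer cut decomposition back to $E(G\setminus f)$. The only difference is cosmetic: the paper achieves the parity and cone conditions by taking the path weights to be a large constant $a$ or its one-entry perturbation $a+1$, whereas you distribute a total of prescribed parity evenly and check the cycle inequalities directly; both rest on the same implicit cone-gluing across the 2-sum.
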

\begin{proof} 
Note that $G +_{f} C_4$ is just the graph obtained from $G$ by subdividing $f$ twice.  We may assume that $G$ is 2-connected by Lemma~\ref{kcliquesums}.
The forward implication then follows directly from Theorem~\ref{main}.

For the converse, assume $G +_{f} C_n \in \mathscr{H}$ and let $u$ and $v$ be the ends of $f$. Now if $G \setminus f \notin \mathscr{H}$, then there exists \[x \in \cone(\mathcal{B}(G \setminus f)) \cap \lattice(\mathcal B (G \setminus f)) \setminus \intcone(\mathcal B (G \setminus f)).\]

Since $x \in \cone(G \setminus f)$, we have that $I(G, x, f) \neq \emptyset$.  Now, as $x \in \lattice (\mathcal B (G \setminus f))$, it follows that the parity of $x(P)$ is the same for all paths $P$ in $G \setminus f$ between $u$ and $v$.  Define $y \in \mathbb{Z}_{\geq 0}^{E(C_n) \setminus f}$ by setting $y_e := a$ for all $e$.  Let $y'$ be a vector obtained from $y$ by changing a single entry from $a$ to $a+1$.  Note that $I(C_n, y, f)=[0, (n-1)a]$, and since $n \geq 4$,  $I(C_n, y', f)=[0, (n-1)a + 1]$.  Let $z \in \mathbb{Z}_{\geq 0}^{E(G +_{f} C_n)}$ be the concatenation of $x$ and $y$ and let
$z' \in \mathbb{Z}_{\geq 0}^{E(G +_{f} C_n)}$ be the concatenation $x$ and $y'$.  

By choosing $a$ sufficiently large, we either  have $z \in \cone(\mathcal B (G +_f C_n)) \cap \lattice(\mathcal B (G +_f C_n)$ or $z' \in \cone(\mathcal B (G +_f C_n) \cap \lattice(\mathcal B (G +_f C_n))$.  However, neither $z$ nor $z'$ belong to $\intcone(\mathcal B (G +_f C_n))$, since when restricted to $E(G \setminus f)$, they are both equal to $x$.  This contradicts $G +_{f} C_n \in \mathscr{H}$.
\end{proof}

Note that one direction of the above proof breaks down if $f$ is subdivided only once, but the following conjecture may still be true.  

\begin{conjecture} 
Let $G \in \h$ and $f \in E(G)$. Then $G \setminus f \in \mathscr{H}$ if and only if $G +_{f} C_3 \in \mathscr{H}$.
\end{conjecture}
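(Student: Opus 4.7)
The forward implication is immediate from Theorem~\ref{main}. After reducing to the case that $G$ is 2-connected (Lemma~\ref{kcliquesums} allows us to split off 1-separations of $G$ not involving $f$), apply Theorem~\ref{main} with $G_1 := C_3$ and $G_2 := G$. Both $C_3$ and $C_3 \setminus f$ (a two-edge path) are $K_5$-minor-free, hence lie in $\h$ by Lemma~\ref{noK5in}, and $C_3$ has the lattice endpoint property with respect to $f$ by Lemma~\ref{noK5LEP}. Combined with $G, G \setminus f \in \h$ by hypothesis, Theorem~\ref{main} yields $G +_f C_3 \in \h$.

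For the converse I would argue the contrapositive, mimicking the reverse direction of Theorem~\ref{delsubdiv}. Suppose $G \setminus f \notin \h$ and let $x \in \cone(\mathcal B(G \setminus f)) \cap \lattice(\mathcal B(G \setminus f)) \setminus \intcone(\mathcal B(G \setminus f))$ be a witness. Let $u,v$ be the ends of $f$, let $e_1, e_2$ be the edges of $C_3 \setminus f$, and let $p_x \in \{0,1\}$ denote the common parity of $x(P)$ over all $u$-$v$ paths $P$ in $G \setminus f$ (well-defined by Lemma~\ref{LOC}). The plan is to choose $a_1, a_2 \in \mathbb{Z}_{\geq 0}$ so that $z := (x;\, a_1, a_2) \in \mathbb{Z}_{\geq 0}^{E(G+_f C_3)}$ lies in $\cone(\mathcal B(G+_f C_3)) \cap \lattice(\mathcal B(G+_f C_3))$; any such $z$ will automatically lie outside $\intcone(\mathcal B(G+_f C_3))$, since restricting a would-be integer cut decomposition of $z$ to $E(G\setminus f)$ places $x$ in $\intcone(\mathcal B(G\setminus f))$. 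By the 2-sum decomposition argument from the proof of Theorem~\ref{main}, the cone condition on $z$ is equivalent to the existence of $\gamma \in I(G,x,f) \cap [\,|a_1-a_2|,\, a_1+a_2\,]$, and the lattice condition (obtained by considering circuits $P \cup \{e_1,e_2\}$ for $u$-$v$ paths $P$ in $G \setminus f$) is equivalent to $a_1 + a_2 \equiv p_x \pmod 2$.

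When $p_x = 0$, the choice $a_1 = a_2 := a$ for a sufficiently large integer $a$ satisfies both conditions, since $[0, 2a] \supseteq I(G,x,f)$ once $a$ is large. The difficulty is the case $p_x = 1$: the lattice parity forces $|a_1 - a_2|$ to be an odd positive integer, so the cone condition reduces to $\gamma_{\max} \geq 1$, where $I(G,x,f) = [\gamma_{\min}, \gamma_{\max}]$. If this endpoint is determined by a cycle inequality $x_f \leq x(C \setminus f)$ through $f$, then $\gamma_{\max} = x(C\setminus f)$ is a non-negative integer, and since $C \setminus f$ is a $u$-$v$ path and $p_x = 1$ we have $x(C \setminus f) \geq 1$; thus in this sub-case the construction succeeds. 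The main obstacle, and the reason the conjecture does not follow directly from this argument, is ruling out the remaining possibility that $\gamma_{\max}(G,x,f) \in (0,1)$, determined by some non-cycle facet of $\cone(\mathcal B(G))$ (for instance a hypermetric inequality). Closing this gap---by using $G \in \h$ to force $\gamma_{\max} \geq 1$ structurally, or by replacing the gadget $C_3$ with something more flexible---is where new ideas appear to be required.
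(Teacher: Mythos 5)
This statement is stated in the paper as a \emph{conjecture}: the paper offers no proof, and the sentence immediately preceding it explains why (``one direction of the above proof breaks down if $f$ is subdivided only once''). So there is no paper proof to match your attempt against; what you have written is an honest partial attempt, and you yourself flag that it does not close.

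Concretely: your forward implication is fine and is exactly the argument the paper uses for $n\geq 4$ in Theorem~\ref{delsubdiv} --- $C_3$, $C_3\setminus f$, $G$, $G\setminus f$ are all in $\h$ and $C_3$ has the lattice endpoint property by Lemma~\ref{noK5LEP}, so Theorem~\ref{main} applies (modulo the same slightly informal reduction to the 2-connected case that the paper itself makes). Your analysis of the converse is also accurate and correctly locates the obstruction: the lattice condition forces $a_1+a_2\equiv p_x\pmod 2$, and when $p_x=1$ the feasibility interval $I(C_3,(a_1,a_2),f)=[\,|a_1-a_2|,a_1+a_2\,]$ is bounded away from $0$, so you need $\gamma_{\max}(G,x,f)\geq 1$; this is guaranteed when the binding facet is a cycle inequality (where $\gamma_{\max}=x(C\setminus f)$ is an odd nonnegative integer) but not when it is, say, a hypermetric facet, where $\gamma_{\max}$ could a priori lie in $[0,1)$. (Note you should include the possibility $\gamma_{\max}=0$ in the problematic range, not just $(0,1)$.) This is precisely the gap that prevents the paper from upgrading the conjecture to a theorem, and your proposal does not supply the missing idea. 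As a referee I would therefore record the forward direction as correct, but the statement as a whole remains unproven by your argument, exactly as it remains unproven in the paper.
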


For our second application of Theorem~\ref{main}, we show that all $K_5^{\perp}$-minor-free graphs are in $\h$. 
First we require the following well-known lemma.  See~\cite{smallminor}, for a proof.

\begin{lemma} \label{small}
If $G$ is a 3-connected $K_5^{\perp}$-minor-free graph, then $G$ is $K_5$-minor-free or $G \cong K_5$.
\end{lemma}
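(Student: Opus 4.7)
The plan is to prove the contrapositive: if $G$ is 3-connected, has a $K_5$-minor, and $G\not\cong K_5$, then $G$ has a $K_5^\perp$-minor. Since $K_5^\perp$ is by definition the unique 3-connected graph sitting one uncontraction above $K_5$, Seymour's Splitter Theorem is the natural tool.

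First I would apply the simple-graph form of the Splitter Theorem to $G$ and $K_5$, using that $K_5$ is 3-connected, has at least four vertices, and is not a wheel. This produces a sequence $G=H_0,H_1,\ldots,H_k=K_5$ of 3-connected simple graphs in which each $H_{i+1}$ is obtained from $H_i$ by a single edge deletion or edge contraction (followed by simplification to remove parallel edges). Since $H_{k-1}$ is a minor of $G$, it suffices to find a $K_5^\perp$-minor in $H_{k-1}$.

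The penultimate graph $H_{k-1}$ is a 3-connected simple graph, not isomorphic to $K_5$, that reaches $K_5$ by a single edge operation. The deletion case is ruled out immediately: $H_{k-1}\setminus e = K_5$ would force $H_{k-1}$ to be $K_5$ plus an additional edge, violating simplicity. Hence $|V(H_{k-1})|=6$ and $H_{k-1}/e$ simplifies to $K_5$ for some edge $e=xy$. Writing $\{a,b,c,d\}=V(H_{k-1})\setminus\{x,y\}$, this forces $\{a,b,c,d\}$ to induce a $K_4$ in $H_{k-1}$ and each of $a,b,c,d$ to have at least one neighbour in $\{x,y\}$.

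The main step is to exploit 3-connectivity of $H_{k-1}$ to read off a $K_5^\perp$-subgraph. Set $X=N(x)\cap\{a,b,c,d\}$ and $Y=N(y)\cap\{a,b,c,d\}$. The edge $xy$ together with $\deg(x),\deg(y)\geq 3$ gives $|X|,|Y|\geq 2$, and the simplification condition gives $X\cup Y=\{a,b,c,d\}$. A short case check (or a short inclusion-exclusion argument) then produces a partition $\{a,b,c,d\}=P\sqcup Q$ with $|P|=|Q|=2$, $P\subseteq X$, and $Q\subseteq Y$. Combining this partition with the $K_4$ on $\{a,b,c,d\}$ and the edge $xy$ exhibits $K_5^\perp$ as a subgraph of $H_{k-1}$, and hence as a minor of $G$. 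The only real technical point is the correct simple-graph statement of the Splitter Theorem together with simplification after contraction; this is standard but worth stating carefully.
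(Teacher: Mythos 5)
Your proof is correct, but it is worth noting that the paper does not actually prove this lemma at all: it is stated as a ``well-known lemma'' and the reader is referred to an external source (\cite{smallminor}) for a proof. So your self-contained derivation via the Splitter Theorem is genuinely different in kind. The argument itself is sound: $K_5$ is $3$-connected, simple, and not a wheel, so the simple-graph form of the Splitter Theorem applies and yields the chain $G = H_0, \dots, H_k = K_5$ of $3$-connected simple graphs; the deletion case for the last step is correctly excluded by simplicity on $5$ vertices; and in the contraction case your conditions $|X|, |Y| \ge 2$ and $X \cup Y = \{a,b,c,d\}$ do force a partition $P \sqcup Q$ with $P \subseteq X$, $Q \subseteq Y$, $|P|=|Q|=2$ (any $2$-subset $Q$ of $Y$ containing all elements of $\{a,b,c,d\} \setminus X$ works, and such a $Q$ exists since $|\{a,b,c,d\} \setminus X| \le 2$ and $|Y| \ge 2$), which exhibits the $11$-edge graph $K_5^{\perp}$ as a subgraph of $H_{k-1}$ and hence as a minor of $G$. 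A small remark: if you use the matroidal form of the Splitter Theorem, the intermediate objects are $3$-connected graphic matroids, so the contraction $H_{k-1}/e$ creates no parallel edges at all; this forces $X \cap Y = \emptyset$ and hence $H_{k-1} \cong K_5^{\perp}$ outright, shortening your case check. The trade-off is that your route imports the Splitter Theorem as a black box, whereas the paper simply outsources the whole statement to the literature; for a self-contained exposition your version is preferable.
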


\begin{theorem} \label{H6free}
All $K_5^{\perp}$-minor-free graphs are in $\mathscr{H}$. 
\end{theorem}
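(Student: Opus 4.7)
The plan is to proceed by induction on $|E(G)|$, with $G$ assumed to be $K_5^{\perp}$-minor-free. I would first reduce to the $2$-connected case: if $G$ is disconnected or has a cut vertex, write $G = G_1 \oplus_K G_2$ with $|K| \leq 1$, where each $G_i$ is $K_5^{\perp}$-minor-free with fewer edges; Lemma~\ref{kcliquesums} combined with the induction hypothesis gives $G \in \h$.

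So assume $G$ is $2$-connected. If $G$ is $3$-connected, Lemma~\ref{small} immediately yields $G \in \h$: either $G$ is $K_5$-minor-free (apply Lemma~\ref{noK5in}) or $G \cong K_5$ (apply Lemma~\ref{K5in}). Otherwise $G$ has a $2$-vertex cut. Using Tutte's decomposition of $2$-connected graphs into triconnected components (equivalently, picking a leaf of the SPQR-tree of $G$), I would find a $2$-separation $(A,B)$ with $A \cap B = \{u,v\}$ such that the piece $G_2 := G[B] \cup \{uv\}$ is either $3$-connected or a cycle. Set $G_1 := G[A] \cup \{uv\}$; both $G_1$ and $G_2$ are $2$-connected minors of $G$, hence $K_5^{\perp}$-minor-free, and each has strictly fewer edges than $G$.

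If $uv \in E(G)$, then $G = G_1 \oplus_{uv} G_2$ and induction combined with Lemma~\ref{kcliquesums} gives $G \in \h$. Otherwise, $f := uv$ is a virtual edge and $G = G_1 +_f G_2$. By induction, each of $G_1, G_2, G_1 \setminus f, G_2 \setminus f$ is in $\h$. Since $G_2$ is $3$-connected or a cycle, $G_2$ is $K_5$-minor-free or is $K_5$ (by Lemma~\ref{small} in the $3$-connected case, trivially for a cycle); in either case Lemma~\ref{noK5LEP} or Lemma~\ref{k5LEP} gives that $G_2$ has the lattice endpoint property with respect to $f$. Applying Theorem~\ref{main}, with $G_2$ playing the role of the graph with the lattice endpoint property, concludes $G = G_1 +_f G_2 \in \h$.

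The principal obstacle is the decomposition step: justifying that a $2$-separation exists for which $G_2$ is $3$-connected (or a small, easily controlled graph such as a cycle), so that Lemma~\ref{small} applies and we inherit the lattice endpoint property. Either an appeal to the classical theory of triconnected components or a direct minimality argument on $|B|$ suffices, but care is needed to ensure that both pieces $G_1, G_2$ are themselves $2$-connected (a hypothesis of Theorem~\ref{main}) and that each has strictly fewer edges than $G$ so that the induction makes progress.
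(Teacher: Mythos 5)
Your proposal is correct and follows essentially the same route as the paper: reduce to the $2$-connected case via Lemma~\ref{kcliquesums}, dispose of the $3$-connected case with Lemma~\ref{small}, and otherwise split along a $2$-separation, using Lemma~\ref{kcliquesums} when the separating pair is joined by a real edge and Theorem~\ref{main} (with the lattice endpoint property supplied by Lemmas~\ref{noK5LEP} and~\ref{k5LEP}) when it is not. The paper obtains the $3$-connected-or-triangle piece by taking a minimal counterexample and a $2$-separation minimizing $|E(G_1)|$ rather than invoking SPQR-trees, but this is only a cosmetic difference.
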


\begin{proof}
Let $G$ be a counterexample with $|V(G)|+|E(G)|$ minimum.  Hence, $G$ is simple and by Lemma~\ref{kcliquesums}, $G$ is also 2-connected.  If $G$ is 3-connected, then  by Lemma~\ref{small}, $G$ is $K_5$-minor-free or $G \cong K_5$.  In either case, $G \in \h$ by Lemma~\ref{K5in} or Lemma~\ref{noK5in}. Thus, $G=G_1 +_f G_2$ or $G=G_1 \oplus_f G_2$  for some $G_1$ and $G_2$ with a common edge $f$ and $|E(G_1)|, |E(G_2)| \geq 3$.  The latter is impossible by Lemma~\ref{kcliquesums}, so $G=G_1 +_f G_2$.   Among all possible such choices, choose $G_1$ and $G_2$ so that $|E(G_1)|$ is minimum.  Thus, $G_1$ is 3-connected or $G_1 \cong K_3$.  By Lemma~\ref{small}, $G_1$ is $K_5$-minor-free or $G_1 \cong K_5$.  In either case, $G_1$ has the lattice endpoint property with respect to $f$ by Lemma~\ref{k5LEP} or Lemma~\ref{noK5LEP}. Moreover, since $G$ is 2-connected, $G_1$ and $G_2$ are also 2-connected.  Finally,  by minimality, all four of the graphs $G_1, G_2$, $G_1 \setminus f$ and $G_2 \setminus f$ belong to $\h$.  Therefore, by Theorem~\ref{main}, $G \in \h$.
\end{proof}

\begin{remark}
It is also claimed in Laurent~\cite{laurent} that all $K_5^{\perp}$-minor-free graphs are in $\h$.  However, as far as we can see, the proof given (on page 260) assumes $\h$ is closed under \emph{$2$-sums}.  We now know that this is false in general by Corollary~\ref{2sum} (although it is true for \emph{2-clique sums}).  Therefore, we believe a different approach (such as the one above) is needed.
\end{remark}

\section{Open Problems}

Note that it is a bit of a curiosity that we do not know $\cone(G)$ explicitly, when $G$ is $K_5^{\perp}$-minor-free.  This appears to be a rare phenomenon.  Typically, it is necessary to know $\cone(\mathcal{B}(G))$ to show that $G \in \h$.  We thus have the following natural open problem.

\begin{problem}
Give an explicit description of $\cone(\mathcal{B}(G))$, when $G$ is $K_5^{\perp}$-minor-free.  
\end{problem}

Next, observe that by Theorem~\ref{H6free} and Theorem~\ref{k6minusedge}, all $K_5^{\perp}$-minor-free graphs are in $\h$, while all graphs with a $(K_6 \setminus e)$-minor are \emph{not} in $\h$.  There  are still many graphs that are not covered by these two theorems.  One such class of graphs are the uncontractions of $K_5$.  In~\cite{tanmay}, it is shown that all uncontractions of $K_5$ are in fact in $\h$.  However, the proof in~\cite{tanmay} is computer assisted.  Namely, there are 22 (non-isomorphic) 3-connected uncontractions of $K_5$, and it is verified by computer that each of these graphs is in $\h$.  
The general case then follows easily from Lemma~\ref{noK5in} and Theorem~\ref{delsubdiv}.  

It would be quite interesting to obtain this result without computer aid.

\begin{problem}
Give a human proof that all uncontractions of $K_5$ are in $\h$.
\end{problem}

Indeed, it turns out that $\cone(\mathcal{B}(G))$ has a very simple and beautiful description when $G$ is a 3-connected uncontraction of $K_5$, see~\cite[Theorem 2.3.3]{tanmay}.  Unfortunately, 
this characterization was also obtained by computer.  

\begin{problem}
Give a human proof that if $G$ is a 3-connected uncontraction of $K_5$, then the description of $\cone(\mathcal{B}(G))$ given in~\cite{tanmay}  is correct.
\end{problem}

Finally, by Corollary~\ref{edel}, the class $\h$ is not minor-closed.  Thus, $\h$ does not have a forbidden-minor characterization, but it may still be possible to give some alternate 
characterization of $\h$.

\begin{problem}
Characterize all graphs that are in $\h$.  
\end{problem}

\bibliography{references}{}

\begin{thebibliography}{10}

\bibitem{graphswithccp}
Brian Alspach, Luis Goddyn, and Cun~Quan Zhang.
\newblock Graphs with the circuit cover property.
\newblock {\em Trans. Amer. Math. Soc.}, 344(1):131--154, 1994.

\bibitem{AvisImaiIto}
David Avis, Hiroshi Imai, and Tsuyoshi Ito.
\newblock Generating facets for the cut polytope of a graph by triangular
  elimination.
\newblock {\em Math. Program.}, 112(2, Ser. A):303--325, 2008.

\bibitem{normaliz3}
Winfried Bruns and Bogdan Ichim.
\newblock Normaliz: algorithms for affine monoids and rational cones.
\newblock {\em J. Algebra}, 324(5):1098--1113, 2010.

\bibitem{normaliz4}
Winfried Bruns and Gesa K{\"a}mpf.
\newblock A {M}acaulay2 interface for {N}ormaliz.
\newblock {\em J. Softw. Algebra Geom.}, 2:15--19, 2010.

\bibitem{tanmay}
Tanmay Deshpande.
\newblock {Cones, lattices and {H}ilbert bases of cuts}.
\newblock Master's thesis, Simon Fraser University, 2013.

\bibitem{deza1}
M.~Deza.
\newblock On the hamming geometry of unitary cubes.
\newblock {\em Doklady Akademii Nauk SSR (in Russian) (resp. Soviet Physics
  Doklady (English translation))}, 134 (resp. 5), 1960 (resp. 1961).

\bibitem{deza2}
M.~Deza.
\newblock Small pentagonal spaces.
\newblock {\em Rendiconti del Seminario Nat. di Brescia}, 7:269--282, 1982.

\bibitem{cutconefacets}
Michel Deza and Monique Laurent.
\newblock Facets for the cut cone. {I}.
\newblock {\em Math. Programming}, 56(2, Ser. A):121--160, 1992.

\bibitem{smallminor}
Guoli Ding and Cheng Liu.
\newblock Excluding a small minor.
\newblock {\em Discrete Appl. Math.}, 161(3):355--368, 2013.

\bibitem{EdmondsMatchingPolytope}
Jack Edmonds.
\newblock Maximum matching and a polyhedron with {$0,1$}-vertices.
\newblock {\em J. Res. Nat. Bur. Standards Sect. B}, 69B:125--130, 1965.

\bibitem{circuitcover}
Xudong Fu and Luis~A. Goddyn.
\newblock Matroids with the circuit cover property.
\newblock {\em European J. Combin.}, 20(1):61--73, 1999.

\bibitem{tdi}
F.~R. Giles and W.~R. Pulleyblank.
\newblock Total dual integrality and integer polyhedra.
\newblock {\em Linear Algebra Appl.}, 25:191--196, 1979.

\bibitem{matchings}
Luis~A. Goddyn.
\newblock Cones, lattices and {H}ilbert bases of circuits and perfect
  matchings.
\newblock In {\em Graph structure theory ({S}eattle, {WA}, 1991)}, volume 147
  of {\em Contemp. Math.}, pages 419--439. Amer. Math. Soc., Providence, RI,
  1993.

\bibitem{Gordan}
P.~Gordan.
\newblock Ueber die {A}ufl\"osung linearer {G}leichungen mit reellen
  {C}oefficienten.
\newblock {\em Math. Ann.}, 6(1):23--28, 1873.

\bibitem{laurent}
Monique Laurent.
\newblock Hilbert bases of cuts.
\newblock {\em Discrete Math.}, 150(1-3):257--279, 1996.
\newblock Selected papers in honour of Paul Erd{\H{o}}s on the occasion of his
  80th birthday (Keszthely, 1993).

\bibitem{LovaszMatchingLattice}
L{\'a}szl{\'o} Lov{\'a}sz.
\newblock Matching structure and the matching lattice.
\newblock {\em J. Combin. Theory Ser. B}, 43(2):187--222, 1987.

\bibitem{OSheaSebo}
Edwin O'Shea and Andr{\'a}s Seb{\"o}.
\newblock Alternatives for testing total dual integrality.
\newblock {\em Math. Program.}, 132(1-2, Ser. A):57--78, 2012.

\bibitem{Schrijver}
Alexander Schrijver.
\newblock {\em Theory of linear and integer programming}.
\newblock Wiley-Interscience Series in Discrete Mathematics. John Wiley \& Sons
  Ltd., Chichester, 1986.
\newblock A Wiley-Interscience Publication.

\bibitem{multiflows}
P.~D. Seymour.
\newblock Matroids and multicommodity flows.
\newblock {\em European J. Combin.}, 2(3):257--290, 1981.

\end{thebibliography}
\bibliographystyle{plain}

\end{document}